\documentclass[12pt]{article}
\usepackage{graphicx} 
\usepackage{amsmath,amssymb,amsthm,mathtools}
\allowdisplaybreaks
\usepackage{enumitem,multicol}
\usepackage[a4paper,margin=1in]{geometry}
\usepackage{setspace}
\usepackage{booktabs}
\usepackage{comment}
\usepackage{float}
\usepackage{subcaption}
\usepackage{listings}
\usepackage{xcolor}
\usepackage{hyperref}
\usepackage[a4paper,margin=1in]{geometry}
\usepackage{authblk}

\theoremstyle{plain}
\newtheorem{theorem}{Theorem}
\newtheorem{lemma}{Lemma}

\newtheorem{proposition}{Proposition}

\theoremstyle{definition}
\newtheorem{definition}{Definition}

\newtheorem{example}{Example}

\title{\textbf{A higher order family of iterative methods for approximating inverse and generalized inverse matrices}}

\author[1]{Alicia Cordero}
\author[1]{Bushra Rani}
\author[1]{Juan R. Torregrosa}

\affil[1]{Instituto Universitario de Matemática Multidisciplinar,\\
Universitat Politècnica de València, València, Spain.\\
E-mail: acordero@mat.upv.es, 
brani@posgrado.upv.es, 
jrtorre@mat.upv.es}

\date{October 3, 2026}

\begin{document}

\maketitle

   \begin{abstract}
       The development of efficient iterative schemes for approximating the inverse and generalized inverse matrices is important in my research fields. To this end, a family of fourth-order matrix iterative schemes for approximating the inverse and generalized inverse matrices is established by incorporating an appropriate weight function into the scalar Heun method. Moreover, a fifth-order matrix iterative scheme is  obtained as a particular member of the proposed family. Under suitable conditions, the theoretical order of convergence is determined for the proposed family, whose stable members are identified through dynamical analysis. To assess their performance and verify the consistency of the theoretical results, some numerical tests are performed on matrices of different sizes.
   \end{abstract}
   \textbf{Keywords}: Iterative method, Inverse matrix, Moore-Penrose inverse, Drazin inverse, Dynamical analysis. 
\section{Introduction}\label{sec:introduction}
In recent decades, computing matrix inverses and generalized inverses has become an important topic of study in many research fields, such as image processing \cite{Chountasis_2009, Chung_2013}, cryptography \cite{Haidary_Makoui_2024}, data analysis \cite{Hawkins_1990}, and signal reconstruction and de-noising \cite{Katsikis_2012}. To solve these problems, many direct methods, such as Gaussian elimination and LU decomposition methods exist, which can be computationally expensive when dealing with large-scale sparse matrices. Consequently, iterative methods have become important tools due to their efficiency and flexibility. The main idea behind these iterative schemes is to generate a sequence of approximations to the exact solution of the problem by repeatedly applying  a computational procedure.
To calculate the inverse of a non-singular square matrix $A \in \mathbb{C}^{N\times N}$, many root-finding iterative methods exist that can be extended to the matrix framework to solve the following nonlinear matrix equation
\begin{equation}\label{eq:non linear matrix eq}
F(X)=X^{-1}-A=0,
\end{equation}
whose unique solution is $X=A^{-1}$ and $F : \mathbb{C}^{N\times N} \to \mathbb{C}^{N\times N}$.

Many iterative schemes for computing matrix inverses and generalized inverses, as well as for analyzing their dynamical behavior, have been introduced in recent years. The development of these methods can be traced back to \cite{Schulz_1933}, where Schulz extended the Newton-Raphson root-finding method to  an  iterative scheme for approximating the inverse of non-singular square matrices. Its mathematical expression is 
\begin{equation}\label{eq:newton_schulz}
X_{n+1} = X_n(2I - AX_n), \qquad n\geq 0,
\end{equation}
which generates a sequence $\{X_n\}$ that converges to $A^{-1}$ under suitable conditions, such as $\|E_0\|=\|I - AX_0\| < 1$  and $A$ being non-singular (see \cite{Higham_2008}). In this case, the residual matrix $E_n = I - AX_n$ satisfies $E_{n+1} = E_n^2$. By taking norms, we obtain $\|E_{n+1}\|\leq \|E_n\|^2$, which implies the quadratic convergence order. In 2007, the same quadratic convergence order was obtained by employing different convergence criteria in \cite{Wu_2007}.

Later, in 2011,  Li et al. extended the Homeier root-finding iterative scheme to the matrix iterative scheme \cite{Li_2011},
\begin{equation}\label{eq:homeier}
X_{n+1}
=
X_n
\left[
I
-
\frac{1}{2}
(I - AX_n)
\left(
I + (2I - AX_n)^2
\right)
\right],
\end{equation}
whose cubic convergence was established in 2023 by Kansal et al. (see \cite{Kansal_2023}).

In general, many classical root-finding methods have been extended to matrix-valued problems in order to construct efficient inversion algorithms,  aiming to increase the convergence order while reducing computational costs. The above-mentioned iterative methods were determined to compute the inverse of a non-singular matrix. To address the need to calculate generalized inverses, such as the Moore-Penrose inverse and the Drazin inverse, many iterative schemes have been established in the literature.

Soleymani et al. introduced a ninth-order method in 2013 for computing the standard inverse and Drazin inverse \cite{Soleymani_2013}, given as
\begin{equation}\label{eq: soleimani et al.}
\begin{aligned}
    X_{n+1}=-\frac{1}{8}&X_n(-7I+9AX_n-5(AX_n)^2+(AX_n)^3)\times(12I-42AX_n+103(AX_n)^2\\&-156(AX_n)^3+157(AX_n)^4-104(AX_n)^5+43(AX_n)^6-10(AX_n)^7+(AX_n)^8).
    \end{aligned}
\end{equation}
Additionally, the authors compared the computational complexity of the proposed method with some existing schemes. Different choices of initial approximations were also proposed in the same manuscript.

Afterward, in 2019, Ahmad et al. proposed a seventh-order scheme for approximating the Drazin inverse of singular matrices in \cite{Ahmad_2019}, given by
\begin{equation}\label{eq: Ahmad et al.}
    X_{n+1}=X_n\Big[I+(AX_n+(AX_n)^2)(I-AX_n+(AX_n)^2)(I+AX_n+(AX_n)^2)\Big].
\end{equation}
Moreover, its efficiency index was computed and compared with some other schemes by employing Ostrowski's definition of the efficiency index \cite{Ostrowski_1966}.
In 2017, a fourth-order method was developed in \cite{Esmaeili_2017} for approximating the Moore-Penrose inverse of a rectangular matrix, which is given by
\begin{equation}\label{eq:stat of art 4th order equation}
    X_{n+1}=X_n\Big[9I-26(AX_n)+34(AX_n)^2-21(AX_n)^3+5(AX_n)^4\Big].
\end{equation}
Subsequently, in 2021, a parametric family of arbitrary order was established in \cite{Cordero_2021} by Cordero et al. for computing the standard and generalized inverses. The authors also determined the basins of attraction of different iterative schemes by employing discrete complex dynamics.

An important limitation of many iterative schemes is their sensitivity to the choice of initial approximation. This has led to increasing interest in the dynamical behavior of iterative methods. In this direction, Cordero et al. introduced a fourth-order family of iterative schemes 
\begin{equation}\label{eq:corder et al. 4th order}
\begin{aligned}
X_{n+1}=X_n\Big[&(4+\alpha)I+AX_n(-2(3+2\alpha)I+AX_n(2(2+3\alpha)I\\&+AX_n(-(1+4\alpha)I+\alpha AX_n)))\Big], \quad n\geq 0, \quad \alpha\in \mathbb{R},
\end{aligned}
\end{equation}
 for computing the standard inverse and the Moore-Penrose inverse and also analyzed the stability of the parametric family using real dynamics in \cite{Cordero_2025}.
Again, in \cite{Cordero_2024}, Cordero et al. introduced a cubic-order Homeier-type family of iterative schemes with a  weight function which is given by
\begin{equation}\label{eq:cordero et al. 3rd order}
    \begin{aligned}
        X_{n+1}=&\frac{1}{2}\frac{X_n(7I-B_n(9I+B_n(-5I+B_n)))}{1+2\gamma} \\&+ \frac{\gamma}{2}\frac{23I-B_n(51I+B_n(-56I+B_n(32I+B_n(-9I+B_n))))}{1+2\gamma},
    \end{aligned}
\end{equation}
where $B_n=AX_n$ and $\gamma \in \mathbb{R} \backslash \{-\frac{1}{2}\}$. Moreover, the stability behavior of different members of the above family was discussed by using real dynamics. Next, some basic notions related to the generalized inverse and the real dynamics of the iterative schemes are given below.
\section{Preliminaries on the iterative schemes and their dynamics}\label{sec:preliminaries}
\begin{definition}[Q-order of convergence for scalar iterative schemes]
Let $\{x_n\}$ be a sequence converging to $\bar{x}$. The sequence is said
to converge with order $q\geq 1$ if there exists a constant
$\mu>0$ such that
\[
\lim_{n\to\infty}
\frac{|e_{n+1}|}{|e_n|^q}
=
\mu.
\]
The constant $\mu$ is called the asymptotic error constant. The value of $q$ determines the speed of convergence. If $q=1$ and $0<\mu<1$, the convergence is \emph{linear}. If $q=2$, the convergence is \emph{quadratic}. If $q=3$, the convergence is \emph{cubic} (see \cite{Weerakoon_2000}).
\end{definition}
\begin{definition}[Q-order of convergence for matrix iterative schemes]
    Following the previous definition of convergence order, a matrix iterative method is said to have a  convergence order of $q$ if there exists a constant $C>0$ such that
\[
\|E_{n+1}\| \leq C\|E_n\|^q,
\]
for sufficiently large values of $n$. The value of $q$ illustrates the order of convergence. (see \cite{Higham_2008}).
\end{definition}

In some cases, the classical inverse of matrices does not exist in the usual sense. The generalized inverses are used to handle such problems. For rectangular or singular matrices, it is essential to  extend the concept of the classical matrix inverse into the generalized inverse. A detailed  study of generalized inverses can be found in \cite{Ben-Israel_2003, Wang_2018}. We give below some important types of generalized inverses, which will be used later in the study of iterative schemes.
\begin{definition}[Moore-Penrose inverse]
The Moore-Penrose inverse or pseudoinverse $X$ of a rectangular matrix $A\in \mathbb{C}^{M\times N}$ is defined as the unique matrix satisfying the following Penrose equations \cite{Ben-Israel_2003}:
\begin{multicols}{4}
\begin{enumerate}[label=\roman*.]
    \item $AX A = A$, 
    \item $X A X=X$,
    \item $AX = (AX)^*$, 
    \item $X A=(X A)^*$.
\end{enumerate}
\end{multicols}
For rectangular matrices, the Moore-Penrose inverse is a natural extension of the classical inverse. The Moore-Penrose inverse of a matrix $A$  is denoted by $A^\dagger$.
\end{definition}
\begin{definition}[Drazin inverse]
Let $A \in \mathbb{C}^{N\times N}$ be a square complex matrix.
The index of $A$ is denoted by
$\operatorname{ind}(A),$ 
 which is defined as the smallest non-negative integer \(k\) satisfying,
$\operatorname{rank}(A^{k+1})
=
\operatorname{rank}(A^k).$
The Drazin inverse of $A$, denoted by $A^D$, is the unique matrix satisfying the following conditions \cite{Ben-Israel_2003}:
\begin{multicols}{3}
    \begin{enumerate}[label=(\alph*)]
    \item $A^{k+1}A^D=A^k$,
    \item $A^DAA^D=A^D$,
    \item $AA^D=A^DA$.
\end{enumerate}
\end{multicols}

Here, $k$ is the index of $A$. The Drazin inverse is useful when the matrix $A$ is singular or non-invertible.
\end{definition}
Next, some concepts from the dynamics of the iterative schemes to analyze their stability are defined. Following the references \cite{Argyros_2017, Chicharro_2013, Devaney_2018},  some basic notions of dynamical analysis are described below.

Let $R:\widehat{\mathbb{C}}\to\widehat{\mathbb{C}}$ be a rational operator obtained by applying an iterative scheme to a quadratic polynomial defined on the Riemann sphere $\widehat{\mathbb{C}}=\mathbb{C}\cup \{\infty\}$.
\begin{definition}[Orbit of a point]
    Let $R:\widehat{\mathbb{C}}\to\widehat{\mathbb{C}}$ be a rational operator and let $z_0\in\widehat{\mathbb{C}}$.  
The orbit of $z_0$ under the action of $R$ is the sequence
\begin{equation*}
\mathcal{O}(z_0)=\{z_0,R(z_0),R^2(z_0),\ldots,R^n(z_0),\ldots\}.
\end{equation*}
\end{definition}
\begin{definition}[Fixed point]
A point $z^*\in\widehat{\mathbb{C}}$ is called a fixed point of the rational operator $R$ if
\begin{equation*}
R(z^*)=z^*.
\end{equation*}
\end{definition}
\begin{definition}[Strange fixed point]
A fixed point of the rational operator that is not a root of the polynomial equation $p(z)=0$ is called a strange fixed point.  
\end{definition}
   \begin{definition}[Periodic point]
A point $\hat{z}\in\widehat{\mathbb{C}}$ is called a periodic point of period $k \in \mathbb{N}$ if
\begin{equation*}
R^k(\hat{z})=\hat{z},
\end{equation*}
and $\;R^p(\hat{z})\neq \hat{z}, \; \forall\, p\in\mathbb{N},\; 1\leq p<k.$

\end{definition}
    \begin{definition}[Stability of a fixed point]
Let $z^*$ be a fixed point of the rational operator $R$. The dynamical behavior of $z^*$ is determined by the modulus of the derivative of $R(z^*)$:
\begin{itemize}
    \item \textbf{Attracting}: if $|R'(z^*)|<1$.
    \item \textbf{Repelling}: if $|R'(z^*)|>1$.
    \item \textbf{Parabolic (or Neutral)}: if $|R'(z^*)|=1$.
    \item \textbf{Superattracting}: if $|R'(z^*)|=0$.
\end{itemize}
\end{definition}
\begin{definition}[Critical point]
A point $z^*\in\widehat{\mathbb{C}}$ is called a critical point of the rational operator $R$ if
\begin{equation*}
R'(z^*)=0.
\end{equation*}
\end{definition}
    \begin{definition}[Free critical point]
Critical points that are not roots of the polynomial equation $p(z)=0$ are called free critical points.
\end{definition}

\begin{definition}[Basin of attraction]
Let $z^*$ be an attracting fixed point of the rational operator $R$.  
Then the basin of attraction of $z^*$ is defined as:
\begin{equation*}
A(z^*)=\left\{z_0\in\widehat{\mathbb{C}}:\lim_{n\to\infty}R^n(z_0)=z^*\right\}.
\end{equation*}
\end{definition}

    \begin{definition}[Immediate basin of attraction]
The immediate basin of attraction of an attractor $z^*$ is the connected component of $A(z^*)$ that contains $z^*$.
\end{definition}

\begin{theorem}[Fatou--Julia]\label{theo:Fatou and julia}
Let $R$ be a rational function.  The immediate basin of attraction of each periodic orbit contains at least one critical point.
\end{theorem}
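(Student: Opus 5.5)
The approach is the classical argument of Fatou and Julia, combining Montel's theorem with the Koenigs and Böttcher linearization theorems. Let $\{z_0,\dots,z_{k-1}\}$ be an attracting periodic orbit of $R$ of period $k$, with multiplier $\lambda=(R^k)'(z_0)$, $|\lambda|<1$. Set $g=R^k$, so that $z_0$ is an attracting fixed point of $g$. Let $U$ be the immediate basin of $z_0$ for $g$. The plan is to show that $U$ contains a critical point of $g$. By the chain rule, $g'(z)=\prod_{j=0}^{k-1}R'(R^j(z))$, so a critical point of $g$ in $U$ yields a point $R^j(c)$ which is critical for $R$. Moreover, $R^j(c)$ lies in $R^j(U)$, and $R^j(U)$ is contained in the immediate basin of $z_j$. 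Hence the immediate basin of the orbit contains a critical point of $R$, which is the statement of Theorem~\ref{theo:Fatou and julia}.

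The superattracting case $\lambda=0$ is trivial, since then $z_0$ itself is critical for $g$. So assume $0<|\lambda|<1$ and, for contradiction, that $U$ contains no critical point of $g$. By Koenigs' linearization theorem there is a local holomorphic map $\phi$ near $z_0$ with $\phi(z_0)=0$, $\phi'(z_0)=1$ and $\phi\circ g=\lambda\phi$. Its inverse $\psi=\phi^{-1}$ is defined on a small disk $D_r=\{|w|<r\}$ and satisfies $\psi(\lambda w)=g(\psi(w))$. We extend $\psi$ by the functional equation. Given $\psi$ on $D_r$, define $\psi$ on $D_{r/|\lambda|}$ by choosing the branch of $g^{-1}\circ\psi(\lambda w)$ that agrees with the existing $\psi$ near $0$. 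This branch exists because $g:U\to U$ is a proper map without critical points, hence a covering map (here $U$ is mapped into itself since it is the component containing $z_0$). A covering map lets us lift along the simply connected disk.

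Iterating this construction, we would obtain an analytic map $\psi$ on all of $\mathbb{C}$ with values in $U$. Since $J(R)$ is infinite, $U$ omits at least three points of $\widehat{\mathbb{C}}$. By Picard's theorem (equivalently, Liouville's theorem after uniformizing $U$ by the disk), $\psi$ would then be constant. This contradicts $\psi'(0)=1$.

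The delicate step is the extension, not the chain-rule bookkeeping. The obstruction is that $\psi$ can only be continued until it meets a critical value of $g$, so the argument really proves the following: the maximal disk of definition of $\psi$ has radius $\rho<\infty$, and on its boundary circle $\psi$ fails to extend precisely because $\psi(\lambda w)$ hits a critical value of $g$. To make this rigorous, I would take $\rho$ to be the supremum of radii to which $\psi$ extends, with $\rho<\infty$ guaranteed by the Picard argument above. By compactness of the circle, we then find $w^*$ with $|w^*|=\rho$ at which local invertibility of $g$ at the point $\psi(w^*)$ fails. That point $c=\psi(w^*)$ lies in $U$, because $\psi(\overline{D_\rho})$ is a connected set containing $z_0$ on which the iterates of $g$ converge to $z_0$, and by construction $g'(c)=0$. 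This gives the critical point inside $U$ and completes the plan.
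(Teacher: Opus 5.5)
The paper gives no proof of this statement. It quotes it as classical background, implicitly for rational maps of degree at least two and for \emph{attracting} cycles (the only case in which the paper defines a basin), so there is no argument in the paper to compare yours with.

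Your proposal is the standard Koenigs-linearization proof, and its core (your second and third paragraphs) is correct and already complete. Under the contradiction hypothesis, $g|_U$ is a finite covering. The lift of $w\mapsto\psi(\lambda w)$ through it with $0\mapsto z_0$ agrees with $\psi$ on $D_r$ by uniqueness of lifts. Repeating this gives a nonconstant holomorphic $\psi:\mathbb{C}\to U\subset\widehat{\mathbb{C}}\setminus J(R)$, which Picard forbids.

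Two small points should be made explicit.
\begin{itemize}
\item State $\deg R\ge 2$. It is exactly what makes $J(R)$ infinite, and the statement fails for M\"obius maps: $R(z)=z/2$ has immediate basin $\mathbb{C}$ at $0$ and no critical points. Note also that your argument does not cover parabolic cycles.
\item $g(U)\subseteq U$ alone does not give properness of $g|_U$; you need $g(\partial U)\cap U=\emptyset$. This follows from complete invariance $g^{-1}(A(z_0))=A(z_0)$. A point of $\partial U$ lies outside $A(z_0)$, since a point of $A(z_0)$ in $\overline{U}$ must lie in the component $U$. Hence its image also lies outside $A(z_0)$, so $g^{-1}(K)\cap U$ is compact for every compact $K\subset U$.
\end{itemize}

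Your last paragraph misplaces the difficulty. Once $g|_U$ is a covering, the extension is automatic and no boundary analysis is needed, so the contradiction proof is already finished. Read as a standalone direct proof (the maximal-disk version, which locates the critical point without contradiction), it has a gap as written. $\psi$ is defined only on the open disk $D_\rho$, so $\psi(w^*)$ and $\psi(\overline{D_\rho})$ are undefined, and ``by construction $g'(c)=0$'' is asserted rather than proved.

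The repair goes as follows.
\begin{itemize}
\item Since $\psi(\lambda w)$ is defined near $\overline{D_\rho}$, the cluster set of $\psi$ at $w^*$ (approached from inside $D_\rho$) is a connected subset of the finite fibre $g^{-1}(\psi(\lambda w^*))$. Hence it is a single point $c$.
\item If $g'(c)\neq 0$, let $h$ be the local inverse of $g$ at $c$. Then $h\circ\psi(\lambda\,\cdot)$ continues $\psi$ holomorphically across $w^*$.
\item If this held at every boundary point, the local continuations would agree on overlaps (convex sets meeting $D_\rho$). They would glue to a larger disk, contradicting maximality of $\rho$.
\item Finally, $c\in U$, because $g(c)=\psi(\lambda w^*)\in U$ puts $c$ in $A(z_0)\cap\overline{U}$.
\end{itemize}
Either delete that paragraph or replace it with this argument.
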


In this work, we build upon these contributions and basic notions by proposing a fourth-order family of matrix iterative schemes. In Section \ref{sec:proposed scalar iterative scheme}, a fourth-order root-finding family of iterative schemes is constructed using a weight function. Its matrix formulation is discussed in Section  \ref{sec: Family of Heun-type matrix iterative schemes} along with its convergence analysis for approximating the standard inverse, Moore-Penrose inverse, and Drazin inverse. The dynamical analysis of the newly developed iterative family is established in Section \ref{sec: dynamics}. Finally, Section \ref{sec:numerical} presents some numerical experiments to study different numerical behaviors.
 
 \section{Proposed iterative scheme}\label{sec:proposed scalar iterative scheme}

In \cite{Singh_2014}, Singh et al. introduced a two-step, third-order class of iterative schemes for an initial guess $x_0\in \mathbb{R}$.
 The method is defined as follows:
\begin{equation}\label{eq:general scheme3}
\begin{aligned}
y_n &= x_n-a\frac{f(x_n)}{f'(x_n)},\\
x_{n+1} &= x_n-A(t_n)\frac{f(x_n)}{f'(x_n)}, \qquad n\geq 0,
\end{aligned}
\end{equation}
where $t_n=\frac{f'(y_n)}{f'(x_n)}  \; \text{and}
\; a\in\mathbb{R}$.
The weight function $A(t_n)$ must satisfy the following conditions
\[
A(1)=1,
\qquad
A'(1)=-\frac{1}{2a},
\qquad
|A''(1)|<\infty.
\]
To obtain the third-order iterative method known as the Heun scheme, a  particular choice was considered
\[
A(t_n)=\frac{t_n+3}{4t_n},
\qquad
a=\frac{2}{3}.
\]
So, the iterative scheme \eqref{eq:general scheme3} becomes
\begin{equation}
\begin{aligned}
y_n
&=
x_n-\frac{2}{3}\frac{f(x_n)}{f'(x_n)},
\\
x_{n+1}
&=
x_n
-
\frac{f(x_n)}{4}
\left(
\frac{1}{f'(x_n)}
+
\frac{3}{f'(y_n)}
\right),
\end{aligned}
\label{eq:Heun_orignal_scalar}
\end{equation}
and its error equation was defined as follows
\begin{equation*}
e_{n+1}
=
\frac{2}{3}c_2^2e_n^3
+
\left(
\frac{5}{3}c_2c_3
-
\frac{13}{9}c_2^3
+
\frac{1}{9}c_4
\right)e_n^4
+
O(e_n^5),
\end{equation*}
where $
c_m=\frac{f^{(m)}(\bar{x})}{m!f'(\bar{x})},
\;
m\geq 2$. This shows the cubic order of convergence of the Heun method.

Next, we incorporate a new weight function into the Heun root-finding scheme defined in \eqref{eq:Heun_orignal_scalar}. The purpose of introducing this weight function is to increase the convergence order without increasing the number of functional evaluations per iteration. Hence, the new Heun-type root-finding scheme is
\begin{equation}
\label{eq:Heun4_scaler}
\begin{aligned}
y_n &= x_n-\frac{2}{3}\frac{f(x_n)}{f'(x_n)}, \\
x_{n+1} &= x_n-
G(\eta_n)\,
\frac{f(x_n)}{4}
\left(
\frac{1}{f'(x_n)}
+\frac{3}{f'(y_n)}
\right), \qquad n\geq 0,
\end{aligned}
\end{equation}
where $\eta_n=\frac{f'(x_n)}{f'(y_n)}$ 
and the weight function $G(\eta_n)$ must satisfy the following conditions
\[
G(1)=1,\qquad
G'(1)=0,\qquad
G''(1)=\frac{3}{4},
\quad \text{and} \quad
|G^{(3)}(1)|<\infty.
\]
The following result establishes the order of convergence of the proposed iterative scheme under the above conditions.
\begin{theorem}
\label{theo:Heun4_scalar_convergence}
Let $\bar{x}$ be a simple root of a sufficiently differentiable function $f$ in a neighborhood of $\bar{x}$, and consider $x_0$ to be an initial guess close enough to the root. Then, the iterative scheme defined in \eqref{eq:Heun4_scaler} has fourth-order convergence provided that the weight function $G(\eta_n)$ satisfies
\[
G(1)=1,\qquad
G'(1)=0,\qquad
G''(1)=\frac{3}{4},
\quad \text{and} \quad
|G^{(3)}(1)|<\infty.
\]
\end{theorem}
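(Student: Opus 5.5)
The proof will be a direct Taylor-expansion argument about the simple root $\bar{x}$, in the same style as the error equation quoted above for the Heun scheme \eqref{eq:Heun_orignal_scalar}. I write $e_n=x_n-\bar{x}$ and $c_m=\frac{f^{(m)}(\bar{x})}{m!f'(\bar{x})}$. The plan has five steps.

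\textbf{Step 1: expansions at $x_n$.} I will expand
\[
f(x_n)=f'(\bar{x})\bigl[e_n+c_2e_n^2+c_3e_n^3+c_4e_n^4+O(e_n^5)\bigr],
\qquad
f'(x_n)=f'(\bar{x})\bigl[1+2c_2e_n+3c_3e_n^2+4c_4e_n^3+O(e_n^4)\bigr].
\]
This gives the Newton quotient
\[
\frac{f(x_n)}{f'(x_n)}=e_n-c_2e_n^2+(2c_2^2-2c_3)e_n^3+O(e_n^4).
\]

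\textbf{Step 2: expansions at $y_n$.} The error of the first substep is
\[
y_n-\bar{x}=\tfrac13 e_n+\tfrac23 c_2e_n^2+O(e_n^3).
\]
Substituting this into the expansion of $f'$ gives
\[
\frac{f'(y_n)}{f'(\bar{x})}=1+\tfrac23 c_2e_n+\bigl(\tfrac43c_2^2+\tfrac13c_3\bigr)e_n^2+O(e_n^3).
\]
Dividing the two derivative expansions then yields
\[
\eta_n-1=\tfrac43 c_2e_n+\kappa e_n^2+O(e_n^3),
\]
where $\kappa$ is an explicit polynomial in $c_2,c_3$ that I will compute.

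\textbf{Step 3: the Heun part.} I will denote by $H_n=\frac{f(x_n)}{4}\bigl(\frac{1}{f'(x_n)}+\frac{3}{f'(y_n)}\bigr)$ the Heun correction. The quoted error equation of \eqref{eq:Heun_orignal_scalar} gives
\[
H_n=e_n-\tfrac23c_2^2e_n^3-\beta e_n^4+O(e_n^5),
\qquad
\beta=\tfrac53c_2c_3-\tfrac{13}{9}c_2^3+\tfrac19c_4,
\]
so I can use it directly rather than recompute it.

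\textbf{Step 4: expand the weight and collect terms.} I will Taylor-expand $G$ about $1$:
\[
G(\eta_n)=G(1)+G'(1)(\eta_n-1)+\tfrac12G''(1)(\eta_n-1)^2+\tfrac16G^{(3)}(1)(\eta_n-1)^3+O\bigl((\eta_n-1)^4\bigr).
\]
The boundedness of $G^{(3)}(1)$ is exactly what makes this legitimate. Then I form
\[
e_{n+1}=e_n-G(\eta_n)H_n
\]
and collect powers of $e_n$:
\begin{itemize}
\item The $e_n$ term vanishes because $G(1)=1$.
\item The $e_n^2$ term is $-G'(1)\tfrac43c_2$, which vanishes because $G'(1)=0$.
\item The $e_n^3$ term is $\tfrac23c_2^2-\tfrac12G''(1)\cdot\tfrac{16}{9}c_2^2$. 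This vanishes precisely when $G''(1)=\tfrac34$, and this is where the third condition is used.
\end{itemize}

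\textbf{Step 5: the $e_n^4$ coefficient.} What remains is
\[
e_{n+1}=\Bigl[\beta-\tfrac34\cdot\tfrac43c_2\kappa-\tfrac16G^{(3)}(1)\bigl(\tfrac43c_2\bigr)^3\Bigr]e_n^4+O(e_n^5).
\]
This coefficient is finite, which establishes order at least four.

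The only genuinely laborious point is computing the second-order coefficient $\kappa$ of $\eta_n$, together with keeping track of the cross term between the $e_n^2$ part of $\eta_n-1$ and the $e_n^3$ part of $H_n$. These terms do not affect the vanishing of the $e_n^3$ term, but they are needed to write the $e_n^4$ asymptotic error constant explicitly. I would obtain $\kappa$ by a symbolic expansion of $f'(x_n)/f'(y_n)$, checked against the known Heun expansion.

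Two remarks on what the argument does and does not show. First, the $e_n^4$ coefficient is generically nonzero, so the order is exactly four. Second, it can vanish for a special choice of $G^{(3)}(1)$ only when $c_2,c_3,c_4$ satisfy particular relations, which would be relevant later for a fifth-order member. This does not contradict the statement.
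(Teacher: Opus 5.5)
Your proposal is correct and follows essentially the same Taylor-expansion argument as the paper. The only shortcut is that you read $H_n=e_n-\frac23c_2^2e_n^3-\bigl(\frac53c_2c_3-\frac{13}{9}c_2^3+\frac19c_4\bigr)e_n^4+O(e_n^5)$ off the quoted Heun error equation instead of re-expanding it; this is legitimate, and it makes transparent that $G''(1)=\frac34$ is exactly what cancels Heun's $\frac23c_2^2e_n^3$ term. Finishing Step 5 with $\kappa=\frac83c_3-\frac{20}{9}c_2^2$ gives the constant $\frac{1}{81}\bigl[(63-32G^{(3)}(1))c_2^3-81c_2c_3+9c_4\bigr]$. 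This matches the paper's error equation for its particular weight function, where $G^{(3)}(1)=\frac{33}{16}-\frac32\beta$; by contrast, the general constant and the intermediate expansion of the Heun part displayed in the paper's proof contain slips. You should also rename your $\beta$, which clashes with the family parameter.
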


\begin{proof}
Consider $e_n=x_n-\bar{x},$ to be the local error at the $n$-th iteration. By expanding $f(x_n)$  about the simple root $\bar{x}$ using the Taylor series, we have 
\begin{equation*}
f(x_n)=f'(\bar{x})e_n+\frac{f''(\bar{x})}{2!}e_n^2+\frac{f^{(3)}(\bar{x})}{3!}e_n^3+\frac{f^{(4)}(\bar{x})}{4!}e_n^4+\frac{f^{(5)}(\bar{x})}{5!}e_n^5+\dots.
    \label{eq: taylor basic2}
    \end{equation*}
Define 
\[
c_m=\frac{f^{(m)}(\bar{x})}{m!f'(\bar{x})},
\qquad
m\geq 2.
\]
Hence, we obtain
\begin{equation}
f(x_n)
=
f'(\bar{x})
\left[
e_n
+
c_2e_n^2
+
c_3e_n^3
+
c_4e_n^4
+
c_5e_n^5
+
O(e_n^6)
\right],
\label{eq:Heun3_taylor}
\end{equation}
and
\begin{equation}
f'(x_n)
=
f'(\bar{x})
\left[
1
+
2c_2e_n
+
3c_3e_n^2
+
4c_4e_n^3
+
5c_5e_n^4
+
O(e_n^5)
\right].
\label{eq: taylor_f'(x_n)}
\end{equation}
Dividing \eqref{eq:Heun3_taylor} by \eqref{eq: taylor_f'(x_n)} and simplifying, we obtain
\begin{equation}
\frac{f(x_n)}{f'(x_n)}
=
e_n
-
c_2e_n^2
+
2(c_2^2-c_3)e_n^3
+
(7c_2c_3-3c_4-4c_2^3)e_n^4
+
O(e_n^5).
\label{eq:Heun_taylor_division}
\end{equation}
Substituting  \eqref{eq:Heun_taylor_division} into the predictor step of \eqref{eq:Heun4_scaler}, we have
\begin{equation*}
y_n
=
\bar{x}+\frac{1}{3}e_n
+\frac{2}{3}c_2e_n^2
-\frac{4}{3}(c_2^2-c_3)e_n^3 -\frac{2}{3}
\left(
7c_2c_3-3c_4+4c_2^3
\right)e_n^4
+O(e_n^5),
\label{eq:y_n taylor}
\end{equation*}
As a result, we obtain the error equation of the first step of \eqref{eq:Heun_orignal_scalar} is 
\begin{equation}
\begin{aligned}
e_{y_n}
=
\frac{1}{3}e_n
&+
\frac{2}{3}c_2e_n^2
-
\frac{4}{3}(c_2^2-c_3)e_n^3
-
\frac{2}{3}
(7c_2c_3-3c_4+4c_2^3)e_n^4
+
O(e_n^5).
\end{aligned}
\label{eq:Heun_taylor_error_scaler1}
\end{equation}
Next, expanding \(f'(y_n)\) about \(\bar{x}\) and substituting \eqref{eq:Heun_taylor_error_scaler1}, we obtain
\begin{equation}
\begin{aligned}
\label{eq:taylor_f'(y_n)}
f'(y_n)
=
f'(\bar{x})
\Big[
1
&+
\frac{2}{3}c_2e_n
+
\left(
\frac{4}{3}c_2^2+\frac{1}{3}c_3
\right)e_n^2+
\left(
\frac{4}{3}c_2c_3
+
\frac{8}{3}(c_2c_3-c_2^3)
+
\frac{4}{27}c_4
\right)e_n^3
+
O(e_n^4)
\Big].
\end{aligned}
\end{equation}
From \eqref{eq: taylor_f'(x_n)} and \eqref{eq:taylor_f'(y_n)}, we get
\begin{equation}
\begin{aligned}
\eta_n
=
\frac{f'(x_n)}{f'(y_n)}
=
1
&+\frac{4}{3}c_2e_n
+\frac{4}{9}(-5c_2^2+6c_3)e_n^2 +\frac{8}{27}
(8c_2^3-21c_2c_3+13c_4)e_n^3
+O(e_n^4).
\end{aligned}
\label{eq: eta_n taylor}
\end{equation}
Expanding the weight function $G(\eta_n)$ about $\eta_n=1$ yields
\begin{equation}
\label{eq:Taylor_weight_fun}
\begin{aligned}
G(\eta_n)
=
&G(1)
+G'(1)(\eta_n-1)+\frac{1}{2}G''(1)(\eta_n-1)^2+\frac{1}{6}G^{(3)}(1)(\eta_n-1)^3
+O((\eta_n-1)^4).
\end{aligned}
\end{equation}
From \eqref{eq:Heun3_taylor}, \eqref{eq: taylor_f'(x_n)}, and \eqref{eq:taylor_f'(y_n)}, we obtain
\begin{equation}
\begin{aligned}
\label{eq:Heun4_subpart_corrector}
\frac{1}{4}
\left(
\frac{f(x_n)}{f'(x_n)}
+
\frac{3f(x_n)}{f'(y_n)}
\right)
=
e_n
+\left(
\frac{7}{6}c_2^2-\frac{1}{2}c_3
\right)e_n^3 +\left(
\frac{5}{3}c_2c_3
-\frac{13}{9}c_2^3
+\frac{1}{9}c_4
\right)e_n^4 +O(e_n^5).
\end{aligned}
\end{equation}
Substituting \eqref{eq:Taylor_weight_fun} and \eqref{eq:Heun4_subpart_corrector} into the corrector step of \eqref{eq:Heun4_scaler} and simplifying gives
\begin{equation*}
e_{n+1}
=
\frac{1}{81}
\Big[
(63-4G^{(3)}(1))c_2^3-81c_2c_3
+4c_4
\Big]e_n^4
+O(e_n^5).
\label{eq: heun scalar 4th error eq}
\end{equation*}
Hence, the method possesses fourth-order convergence.
\end{proof}
Next, we define some weight functions that satisfy all the conditions of the above theorem and also possess fourth-order convergence. The objective of all these functions is to choose only those functions that can construct an inverse-free matrix iterative scheme.

\subsection{Particular choice of weight functions}
Consider the weight function
\begin{align*}
G(\eta_n)
=
\frac{1}{1+3\eta_n}
\Bigg[
&\beta
+\left(
\frac{15}{2}-3\beta
\right)\eta_n +(-6+3\beta)\eta_n^2 +\left(
\frac{5}{2}-\beta
\right)\eta_n^3
\Bigg],
\qquad
\beta\in\mathbb{R},
\label{eq: weight fun. 1}
\end{align*}
which  satisfies all conditions of the 
Theorem \ref{theo:Heun4_scalar_convergence}. Hence, the iterative scheme becomes
\begin{equation}
\label{eq:Heun4_scaler_final}
\begin{aligned}
y_n
&=
x_n-\frac{2}{3}\frac{f(x_n)}{f'(x_n)}, \\
x_{n+1}
=
x_n-
\frac{1}{1+3\eta_n}
\Bigg[
&\beta
+\left(
\frac{15}{2}-3\beta
\right)\eta_n +(-6+3\beta)\eta_n^2 \\&+\left(
\frac{5}{2}-\beta
\right)\eta_n^3
\Bigg]
\left(
\frac{f(x_n)}{4f'(x_n)}
+
\frac{3f(x_n)}{4f'(y_n)}
\right),
\end{aligned}
\end{equation}
where $\eta_n=\frac{f'(x_n)}{f'(y_n)}$  and the corresponding error equation is
\begin{equation*}\label{eq: error eq heun with weight fun. 1}
e_{n+1}
=
\frac{1}{27}
\Big[
(-1+16\beta)c_2^3
-27c_2c_3
+3c_4
\Big]e_n^4
+O(e_n^5),
\end{equation*}
which confirms that the family of Heun-type  iterative schemes achieves fourth-order convergence. An appropriate choice of weight function is required to achieve an inverse free matrix iterative scheme. The formulation of an inverse-free matrix iterative scheme cannot be achieved by an arbitrary weight function that satisfies the conditions of Theorem \ref{theo:Heun4_scalar_convergence}.
\section{Family of Heun-type matrix iterative schemes}\label{sec: Family of Heun-type matrix iterative schemes}
In this section, we extend the scalar fourth-order schemes defined in \eqref{eq:Heun4_scaler_final}  to matrix iterative schemes for approximating the inverse of a non-singular matrix $A\in\mathbb{C}^{N\times N}$.
The matrix formulation can be derived directly from the scalar equation
\begin{equation}\label{eq:scalar to matrix transf}
f(x)=\frac{1}{x}-a.
\end{equation}
After simplification, the resulting scalar expressions can be converted into matrix form.

 The matrix formulation of \eqref{eq:Heun4_scaler_final} is as follows:
\begin{equation}
\label{eq:Heun4_matrix}
\begin{aligned}
X_{n+1}
=
X_n+ \frac{1}{729} X_n \Big[&16 \beta I (-4I + A X_n)^3 (-I + A X_n)^4 - (5I - 2 A X_n)^2 (-I+ 
       A X_n) \\&\times(205I + A X_n (-5I + A X_n) (71I + 10 A X_n (-5I + A X_n)))\Big].
\end{aligned}
\end{equation}
Equation \eqref{eq:Heun4_matrix} represents the inverse-free  family of Heun-type iterative methods for approximating the inverse matrices. This expression can be written as,

\begin{equation}\label{eq: simplified HEUN4}
X_{n+1} = \frac{1}{729} X_n \, \mathcal{P}(A X_n),
\end{equation}
where 
\begin{align*}
\mathcal{P}(A X_n)&=
5854I-1024\beta I -18100AX_n
+4864\beta AX_n+28920(AX_n)^2
-9408\beta(AX_n)^2 \\
&-26285(AX_n)^3
+9488\beta(AX_n)^3 +13874(AX_n)^4
-5312\beta(AX_n)^4 -4134(AX_n)^5\\&
+1632\beta(AX_n)^5 +640(AX_n)^6
-256\beta(AX_n)^6 -40(AX_n)^7
+16\beta(AX_n)^7.
\end{align*}

Throughout this work, we employ the inverse-free family defined in \eqref{eq:Heun4_matrix} as the Heun-type family of matrix iterative schemes.
We denote this family of iterative schemes by $BR(\beta)$. The next step is to investigate the theoretical convergence properties of the newly derived family of matrix iterative schemes.
\subsection{Convergence analysis for standard inverse}

\begin{theorem}\label{theo:convergence_heun4}
Let $A \in \mathbb{C}^{N \times N}$ be a non-singular matrix, and let the initial approximation $X_0$ satisfy $\|I - AX_0\| < 1$.  If $\beta \in \left[\frac{25}{16}, \frac{5}{2}\right]$, then the sequence $\{X_n\}$ generated by the iterative scheme \eqref{eq:Heun4_matrix} converges to $A^{-1}$ with order $p=4$.  Moreover, the  error satisfies 
\begin{align*}
       \|e_{n+1}\|\leq \|A^{-1}\|\,\|A\|^4\|e_n\|^4&\Bigg[\left|-\frac{25}{27}+ \frac{16\beta}{27}\right| +\frac{10}{27}\|A\|\,\|e_n\| + \left|\frac{86}{81}-\frac{32\beta}{81}\right|\|A\|^2\|e_n\|^2\\& + \left|\frac{320}{729}-\frac{128\beta}{729}\right| \|A\|^3\,\|e_n\|^3 + \left|\frac{40}{729}-\frac{16\beta}{729}\right|\|A\|^4
       \|e_n\|^4 \Bigg],
    \end{align*}
where $e_n = X_n - A^{-1}$. In particular, for $\beta=\frac{25}{16}$, the iterative scheme ${X_n}$ converges to inverse matrix $A^{-1}$ with convergence order $p=5$. Moreover, the error satisfies
\begin{equation*}
\|e_{n+1}\|\leq\|A^{-1}\|\,\|A\|^5\|e_n\|^5\Bigg[\frac{10}{27} + \frac{4}{9}\|A\|\,\|e_n\| + \frac{120}{729} \|A\|^2\|e_n\|^2 + \frac{15}{729}\|A\|^3
       \|e_n\|^3 \Bigg],
\end{equation*}
\end{theorem}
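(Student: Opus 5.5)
The plan is to work with the residual $E_n = I - AX_n$ rather than with $e_n$ directly, exactly as in the Schulz analysis recalled in the introduction. Write $B_n = AX_n = I - E_n$. Multiplying \eqref{eq: simplified HEUN4} on the left by $A$ gives $AX_{n+1} = \frac{1}{729} B_n\,\mathcal{P}(B_n)$. Hence $E_{n+1} = I - \frac{1}{729}(I-E_n)\,\mathcal{P}(I-E_n)$ is a polynomial in the single matrix $E_n$, and all powers involved commute.

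The first step is the algebraic identity
\[
I - \tfrac{1}{729}(I-E)\,\mathcal{P}(I-E) = E^4\big(q_0 I + q_1 E + q_2 E^2 + q_3 E^3 + q_4 E^4\big),
\]
which is a scalar polynomial identity in $t$ (with $B=1-t$). I expect the coefficients to be
\[
q_0=\tfrac{16\beta-25}{27},\quad q_1=\tfrac{10}{27},\quad q_2=\tfrac{86-32\beta}{81},\quad q_3=\tfrac{320-128\beta}{729},\quad q_4=\tfrac{40-16\beta}{729},
\]
up to the signs displayed in the statement. That the terms of order $t^0$ through $t^3$ vanish is guaranteed by the fourth-order scalar analysis of Theorem~\ref{theo:Heun4_scalar_convergence} applied to $f(x)=1/x-a$; the coefficient match can be checked directly, since the degree-8 polynomial expands mechanically. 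This expansion is the main computational obstacle; I would carry it out in the factored form \eqref{eq:Heun4_matrix}. That form already exhibits the factors $(-I+AX_n)^4 = E_n^4$ in the $\beta$-part and $(-I+AX_n) = -E_n$ in the other part, which makes the bookkeeping easier.

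The second step is convergence. For $\beta\in[\frac{25}{16},\frac{5}{2}]$, every $q_k$ above is nonnegative: $q_0\ge 0$ iff $\beta\ge 25/16$; $q_2\ge0$ iff $\beta\le 86/32$; and $q_3,q_4\ge 0$ iff $\beta\le 5/2$. Therefore $\sum_k|q_k|=\sum_k q_k = Q(1)$. Evaluating the identity at $t=1$, i.e. $B=0$, gives $1-0=Q(1)$, so $\sum_k|q_k| = 1$. Taking norms, with $\|E_n\|<1$,
\[
\|E_{n+1}\|\le \|E_n\|^4\sum_k |q_k|\,\|E_n\|^k \le \|E_n\|^4 .
\]
By induction from $\|E_0\|<1$ we get $\|E_n\|\le \|E_0\|^{4^n}\to 0$, so $X_n\to A^{-1}$. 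This is precisely where the interval restriction on $\beta$ is used.

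The third step converts to $e_n = X_n - A^{-1}$. Since $E_n = -Ae_n$ and $e_{n+1} = -A^{-1}E_{n+1}$, we have
\[
e_{n+1} = -A^{-1}(Ae_n)^4\sum_k q_k(-Ae_n)^k .
\]
Submultiplicativity then yields the stated bound $\|A^{-1}\|\|A\|^4\|e_n\|^4\sum_k|q_k|\,\|A\|^k\|e_n\|^k$, hence order $4$. Finally, for $\beta=\frac{25}{16}$ we have $q_0=0$, so $E_{n+1}=E_n^5(q_1+q_2E_n+q_3E_n^2+q_4E_n^3)$. Substituting $\beta=\frac{25}{16}$ gives $q_2=\frac{4}{9}$, $q_3=\frac{120}{729}$ and $q_4=\frac{15}{729}$, together with $q_1=\frac{10}{27}$. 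The same conversion then produces the fifth-order estimate.
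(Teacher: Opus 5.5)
Your proposal is correct and follows the paper's proof essentially step for step: you expand the residual as $E_{n+1}=E_n^4(q_0I+q_1E_n+\dots+q_4E_n^4)$ with exactly the coefficients the paper obtains, bound it using $\sum_k|q_k|=1$ on $[\frac{25}{16},\frac52]$, iterate from $\|E_0\|<1$, and pass to $e_n$ through $E_n=-Ae_n$ and $e_{n+1}=-A^{-1}E_{n+1}$. Your justification of $w=1$ (every $q_k\ge 0$ on that interval, and $\sum_k q_k=Q(1)=1$ by evaluating the identity at $AX_n=0$) is a cleaner replacement for the paper's unexplained ``direct evaluation'', and it also shows that the interval is exactly where $w=1$.
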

\begin{proof}
Consider the residual matrix at the $n$-th iteration $E_n = I - AX_n.$ At the $(n+1)$-th iteration,  the residual matrix satisfies
\begin{equation}
\label{eq:residual_n+1}
E_{n+1}
= I - AX_{n+1}.
\end{equation}
Substituting \eqref{eq:Heun4_matrix} into the equation \eqref{eq:residual_n+1} yields
\begin{align*}
E_{n+1}
= \frac{1}{729}\Big[&
(5854-1024\beta)AX_n - (18100-4864\beta)(AX_n)^2 + (28920-9408\beta)(AX_n)^3 \\&-(26285-9488\beta)(AX_n)^4 + (13874-5312\beta)(AX_n)^5 - (4134-1632\beta)(AX_n)^6 \\
&+ (640-256\beta)(AX_n)^7 - (40-16\beta)(AX_n)^8
\Big] \; - I.
\end{align*}
Since $AX_n = I - E_n$, substitution gives a polynomial expansion in $E_n$:
\begin{equation}\label{eq:residual_heun4_without_norm}
E_{n+1}
=
\left(-\frac{25}{27}+\frac{16\beta}{27}\right)E_n^4
+ \frac{10}{27}E_n^5
+ \left(\frac{86}{81}-\frac{32\beta}{81}\right)E_n^6 
+ \left(\frac{320}{729}-\frac{128\beta}{729}\right)E_n^7
+ \left(\frac{40}{729}-\frac{16\beta}{729}\right)E_n^8.
\end{equation}
Taking norms on both sides yields
\begin{equation}
\begin{aligned}
\|E_{n+1}\|
\leq&
\left|-\frac{25}{27}+\frac{16\beta}{27}\right|\|E_n\|^4
+ \frac{10}{27}\|E_n\|^5 + \left|\frac{86}{81}-\frac{32\beta}{81}\right|\|E_n\|^6
\\&+ \left|\frac{320}{729}-\frac{128\beta}{729}\right|\|E_n\|^7 + \left|\frac{40}{729}-\frac{16\beta}{729}\right|\|E_n\|^8.
\end{aligned}
\end{equation}
Since $\|E_0\| < 1$,  by induction process $\|E_n\| < 1$ for $n\geq 0$. Hence, by defining
\[
w =
\left|-\frac{25}{27}+\frac{16\beta}{27}\right|
+ \frac{10}{27}
+ \left|\frac{86}{81}-\frac{32\beta}{81}\right|
+ \left|\frac{320}{729}-\frac{128\beta}{729}\right|
+ \left|\frac{40}{729}-\frac{16\beta}{729}\right|,
\]
we obtain
\begin{align*}
\|E_{n+1}\|
&\leq w \|E_n\|^4 \\
&\leq w (w \|E_{n-1}\|^4)^4
= w^{1+4} \|E_{n-1}\|^{4^2} \\
&\leq w^{1+4+4^2} \|E_{n-2}\|^{4^3}\\
&\quad  \vdots\\
& \leq w^{\frac{4^{n+1}-1}{3}}\|E_0\|^{4^{n+1}} ,\quad\quad \forall n \geq 0.
\end{align*}
For $\beta \in \left[\frac{25}{16}, \frac{5}{2}\right]$, a direct evaluation shows that $w = 1$. Because $\|E_0\|<1$, then $\|E_0\|^{4^{n+1}} \to 0$ when $n \to \infty$. Consequently $\|E_{n+1}\| \leq \|E_0\|^{4^{n+1}}$.

Hence,  $\|E_n\| \to 0 \;\; \text{as} \;n \to \infty$, 
which implies $X_n \to A^{-1}.$ So, the convergence of the sequence $\{X_n\}$ is  established.

Additionally, we have
\begin{align*}
\|I-AX_{n+1}\|
\leq\|I-AX_{n}\|^4&\Bigg( \left|-\frac{25}{27}+ \frac{16\beta}{27}\right| +\frac{10}{27}\|I-AX_{n}\| + \left|\frac{86}{81}-\frac{32\beta}{81}\right|\|I-AX_{n}\|^2 \\&+ \left|\frac{320}{729}-\frac{128\beta}{729}\right| \|I-AX_{n}\|^3 + \left|\frac{40}{729}-\frac{16\beta}{729}\right|\|I-AX_{n}\|^4\Bigg),
\end{align*}
and we can write the above expression as
\begin{align*}
\|A(A^{-1}-X_{n+1})\|
\leq\|A(A^{-1}-X_{n})\|^4&\Bigg( \left|-\frac{25}{27}+ \frac{16\beta}{27}\right| +\frac{10}{27}\|A(A^{-1}-X_{n})\| \\&+ \left|\frac{86}{81}-\frac{32\beta}{81}\right|\|A(A^{-1}-X_{n})\|^2 + \\&\left|\frac{320}{729}-\frac{128\beta}{729}\right| \|A(A^{-1}-X_{n})\|^3+ \\&\left|\frac{40}{729}-\frac{16\beta}{729}\right|\|A(A^{-1}-X_{n})\|^4\Bigg).
\end{align*}
Therefore,
\begin{align*}
\|A^{-1}A(A^{-1}-X_{n+1})\|
&\le \|A^{-1}\|\,\|A(A^{-1}-X_{n+1})\| \\
&\le \|A^{-1}\|\,\|A(A^{-1}-X_{n})\|^4 \Bigg( 
\left|-\frac{25}{27}+ \frac{16\beta}{27}\right|  + \frac{10}{27}\|A(A^{-1}-X_{n})\|\\&+ \left|\frac{86}{81}-\frac{32\beta}{81}\right|\|A(A^{-1}-X_{n})\|^2 + \left|\frac{320}{729}-\frac{128\beta}{729}\right|\|A(A^{-1}-X_{n})\|^3\\&  + \left|\frac{40}{729}-\frac{16\beta}{729}\right|\|A(A^{-1}-X_{n})\|^4 
\Bigg)  \\
&\le \left|-\frac{25}{27}+ \frac{16\beta}{27}\right|
\|A^{-1}\|\,\|A\|^4 \|A^{-1}-X_{n}\|^4 \\&+\frac{10}{27}\|A^{-1}\|\,\|A\|^5 \|A^{-1}-X_{n}\|^5 \\& + \left|\frac{86}{81}-\frac{32\beta}{81}\right|
\|A^{-1}\|\,\|A\|^6 \|A^{-1}-X_{n}\|^6 \\& + \left|\frac{320}{729}-\frac{128\beta}{729}\right|
\|A^{-1}\|\,\|A\|^7 \|A^{-1}-X_{n}\|^7 \\&+ \left|\frac{40}{729}-\frac{16\beta}{729}\right|
\|A^{-1}\|\,\|A\|^8 \|A^{-1}-X_{n}\|^8.
\end{align*}
The error between the exact and approximated solutions is defined as $e_n=X_n-A^{-1}$. Then,
    \begin{align*}
       \|e_{n+1}\|\leq& \left|-\frac{25}{27}+ \frac{16\beta}{27}\right| \|A^{-1}\|\,\|A\|^4\|e_n\|^4+\frac{10}{27}\|A^{-1}\|\,\|A\|^5\|e_n\|^5 \\&+\left|\frac{86}{81}-\frac{32\beta}{81}\right|\|A^{-1}\|\,\|A\|^6\|e_n\|^6 + \left|\frac{320}{729}-\frac{128\beta}{729}\right| \|A^{-1}\|\,\|A\|^7\|e_n\|^7 \\&+\left|\frac{40}{729}-\frac{16\beta}{729}\right|\|A^{-1}\|\,\|A\|^8
       \|e_n\|^8. 
    \end{align*}
After factorization, we obtain
\begin{equation}\label{eq:Error_eq_heun4}
    \begin{aligned}
       \|e_{n+1}\|\leq \|A^{-1}\|\,\|A\|^4\|e_n\|^4&\Bigg(\left|-\frac{25}{27}+ \frac{16\beta}{27}\right| +\frac{10}{27}\|A\|\,\|e_n\| + \left|\frac{86}{81}-\frac{32\beta}{81}\right|\|A\|^2\|e_n\|^2\\& + \left|\frac{320}{729}-\frac{128\beta}{729}\right| \|A\|^3\,\|e_n\|^3 + \left|\frac{40}{729}-\frac{16\beta}{729}\right|\|A\|^4
       \|e_n\|^4 \Bigg).
    \end{aligned}
\end{equation}

Hence, the sequence $\{X_n\}$ converges to the inverse matrix $A^{-1}$ with order of convergence at least four for $\frac{25}{16} \leq \beta \leq \frac{5}{2}$.

In particular, by choosing $\beta=\frac{25}{16}$, factoring \eqref{eq:Error_eq_heun4} gives
\begin{equation*}
\|e_{n+1}\|\leq\|A^{-1}\|\,\|A\|^5\|e_n\|^5\Bigg[\frac{10}{27} + \frac{4}{9}\|A\|\,\|e_n\| + \frac{120}{729} \|A\|^2\|e_n\|^2 + \frac{15}{729}\|A\|^3
       \|e_n\|^3 \Bigg],
\end{equation*}
which leads to the fifth-order convergence of the iterative scheme for approximating the standard inverse of $A$.
\end{proof}
\subsection{Convergence analysis for generalized inverses}
Some auxiliary results are presented below to establish the convergence of the proposed iterative schemes for approximating the Moore--Penrose inverse of a rectangular matrix.

\begin{lemma}[{\cite{Stanimirovic_2008}}]
\label{lemma:spectral_radius_only}  Consider $P \in \mathbb{C}^{N\times N}$ and $Q\in \mathbb{C}^{N\times N}$ to be the complex square matrices, which satisfy $P=P^2$ and $PQ=QP$, then
    \[\rho(PQ)\leq \rho(Q).\]
\end{lemma}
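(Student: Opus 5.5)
Since $P$ is idempotent and commutes with $Q$, I would reduce everything to a block-triangular form in which the spectrum of $PQ$ can be read off from that of $Q$. An idempotent is diagonalizable with eigenvalues in $\{0,1\}$, and $\mathbb{C}^N=\operatorname{range}(P)\oplus\ker(P)$. Let $S$ be an invertible matrix whose first $r=\operatorname{rank}(P)$ columns form a basis of $\operatorname{range}(P)$ and whose remaining columns form a basis of $\ker(P)$. Then
\[
S^{-1}PS=\begin{pmatrix} I_r & 0\\ 0 & 0\end{pmatrix}.
\]

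Next I would use commutativity to show that both subspaces are $Q$-invariant. If $x=Px$, then $Qx=QPx=PQx$, so $Qx\in\operatorname{range}(P)$. If $Px=0$, then $PQx=QPx=0$, so $Qx\in\ker(P)$. Hence
\[
S^{-1}QS=\begin{pmatrix} Q_1 & 0\\ 0 & Q_2\end{pmatrix},
\qquad
S^{-1}PQS=\begin{pmatrix} Q_1 & 0\\ 0 & 0\end{pmatrix}.
\]

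The spectrum is invariant under similarity, so $\sigma(PQ)=\sigma(Q_1)\cup\{0\}$ when $r<N$, and $\sigma(PQ)=\sigma(Q_1)$ when $r=N$. Also $\sigma(Q_1)\subseteq\sigma(Q_1)\cup\sigma(Q_2)=\sigma(Q)$. Therefore every eigenvalue of $PQ$ is either an eigenvalue of $Q$ or equal to $0$. Since $\rho(Q)\ge 0$, taking the maximum modulus gives $\rho(PQ)\le\rho(Q)$.

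The degenerate cases are automatic. If $P=0$, then $\rho(PQ)=0\le \rho(Q)$; if $P=I$, equality holds. A simpler alternative I would keep in reserve avoids the block form. Commuting matrices are simultaneously triangularizable, and the eigenvalues of an idempotent are $0$ or $1$. So in a common triangular basis the diagonal of $PQ$ consists of the products $p_{ii}q_{ii}$ with $p_{ii}\in\{0,1\}$, which immediately gives the bound.

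The only point needing care is the invariance of $\operatorname{range}(P)$ and $\ker(P)$ under $Q$, and that follows in one line from $PQ=QP$. I do not expect a genuine obstacle.
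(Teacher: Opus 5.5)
The paper gives no proof of this lemma; it is quoted from the cited reference, so there is no in-paper argument to compare yours with. Your proof is correct and complete. From $P^2=P$ you get $\mathbb{C}^N=\mathcal{R}(P)\oplus\mathcal{N}(P)$, with $P$ acting as the identity on $\mathcal{R}(P)$. Commutativity makes both summands $Q$-invariant. The block form $S^{-1}PQS=\operatorname{diag}(Q_1,0)$ then gives $\sigma(PQ)\setminus\{0\}\subseteq\sigma(Q_1)\subseteq\sigma(Q)$, and this covers the edge cases $r=0$ and $r=N$.

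A shorter route reaches the same inclusion without choosing a basis. Suppose $PQx=\lambda x$ with $x\neq 0$ and $\lambda\neq 0$. Applying $P$ gives $\lambda Px=P^2Qx=PQx=\lambda x$, so $Px=x$. Then $Qx=QPx=PQx=\lambda x$, hence $\lambda\in\sigma(Q)$.

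Your block decomposition gives a little more structure: it identifies $PQ$ with the restriction of $Q$ to $\mathcal{R}(P)$. Your reserve argument via simultaneous triangularization proves the more general fact that $\rho(PQ)\le\rho(P)\rho(Q)$ for any commuting pair. The lemma then follows because $\rho(P)\le 1$ for an idempotent.
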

\begin{lemma}[{\cite{Horn_1985}}]
\label{lemma:spectral_radius_norm} 
Let $A \in \mathbb{C}^{N\times N}$ and $\epsilon >0$. Then the matrix norm satisfies
\[\rho(A)\leq \|A\|\leq \rho(A)+\epsilon,\]
where $\rho(A)$ denotes the largest absolute value of the eigen values of $A$, known as spectral radius of $A$.
\end{lemma}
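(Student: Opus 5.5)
The statement is the standard norm/spectral-radius sandwich, so I would treat it as two separate inequalities. For the lower bound $\rho(A)\le\|A\|$, which holds for any submultiplicative (induced) matrix norm, I would take an eigenpair $Av=\lambda v$ with $|\lambda|=\rho(A)$ and $v\neq 0$. I then form the rank-one matrix $V=v\mathbf{1}^T$ (all columns equal to $v$), so that $AV=\lambda V$. Submultiplicativity gives $|\lambda|\,\|V\|=\|AV\|\le\|A\|\,\|V\|$, and dividing by $\|V\|>0$ yields $\rho(A)\le\|A\|$.

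For the upper bound, the inequality cannot hold for every norm, since a fixed norm can exceed $\rho(A)$ by an arbitrary amount (for example, a nilpotent matrix with a large entry). The correct reading, as in \cite{Horn_1985}, is that for each $A$ and $\epsilon>0$ there \emph{exists} an induced matrix norm with $\|A\|\le\rho(A)+\epsilon$. I would state this explicitly, since the later Moore--Penrose analysis will use it precisely in this form: to pass from $\rho(\cdot)<1$ to a norm bound smaller than $1$.

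The construction goes through Schur triangularization. First I write $A=UTU^*$ with $U$ unitary and $T$ upper triangular, the diagonal of $T$ holding the eigenvalues. Next, for $t>0$, I set $D_t=\operatorname{diag}(t,t^2,\dots,t^N)$. Then $D_tTD_t^{-1}$ has the same diagonal, while each entry $t_{ij}$ with $j>i$ is scaled to $t^{i-j}t_{ij}$. Choosing $t$ large makes the total off-diagonal mass smaller than $\epsilon$.

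The norm is then defined by $\|B\|:=\|D_tU^*BUD_t^{-1}\|_1$, the maximum column sum norm applied after a similarity. It is an induced norm, coming from the vector norm $x\mapsto\|D_tU^*x\|_1$, and hence submultiplicative. For $A$, each column sum of $D_tTD_t^{-1}$ is at most $|\lambda_j|+\epsilon\le\rho(A)+\epsilon$.

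The only real obstacle is this conceptual point that the norm depends on $A$ and $\epsilon$; the remaining estimates are routine.
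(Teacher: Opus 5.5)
The paper gives no proof of its own and simply cites \cite{Horn_1985}. Your argument is correct and is essentially the standard proof found there: the rank-one matrix $v\mathbf{1}^T$ with submultiplicativity gives $\rho(A)\le\|A\|$, and Schur triangularization, the scaling $D_t=\operatorname{diag}(t,\dots,t^N)$ and the induced norm $B\mapsto\|D_tU^*BUD_t^{-1}\|_1$ give the upper bound. You are also right that the upper bound holds only for some norm depending on $A$ and $\epsilon$, not for every norm as the statement's wording suggests, and this existential reading is exactly how the paper invokes the lemma in the Moore--Penrose convergence proof.
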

The above lemmas will help us establish the convergence analysis of the Heun-type iterative methods for calculating the Moore-Penrose inverse of rectangular matrices. To achieve the same purpose, another result is provided below.
\begin{lemma}
Let $\{X_n\}$ be the sequence generated by \eqref{eq:Heun4_matrix}, where 
$\beta \in \left[\frac{25}{16}, \frac{5}{2}\right]$, and the initial approximation is given by
$X_0 = \alpha A^*$, where $\alpha \in \mathbb{R}$ is a real constant. Then, the following properties hold for all $n \ge 0$:
\begin{multicols}{2}
\begin{enumerate}[label=i.]
    \item $X_n A A^\dagger = X_n$,
    \item $A^\dagger A X_n = X_n$,
    \item $(A X_n)^* = A X_n$,
    \item $(X_n A)^* = X_n A$.
\end{enumerate}
\end{multicols}
\end{lemma}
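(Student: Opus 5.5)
I will argue by induction on $n$, using only that the iteration \eqref{eq: simplified HEUN4} has the form $X_{n+1} = X_n\,\mathcal{P}(AX_n)$, where $\mathcal{P}(t)=\sum_{j=0}^{7} p_j t^j$ is a polynomial with \emph{real} coefficients $p_j$ (each $p_j$ is a real combination of integers and $\beta\in\mathbb{R}$, divided by $729$). The restriction $\beta\in\left[\frac{25}{16},\frac52\right]$ is not needed for these algebraic identities; only the realness of $\beta$ and of $\alpha$ matters. The tools are the Penrose equations for $A^\dagger$ and the identity $A^*AA^\dagger = A^*$, which follows from $(AA^\dagger)^*=AA^\dagger$ and $AA^\dagger A=A$: indeed $A^*AA^\dagger = A^*(AA^\dagger)^* = (AA^\dagger A)^* = A^*$. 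Similarly $A^\dagger A A^* = (A^\dagger A)^* A^* = (AA^\dagger A)^* = A^*$.

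\emph{Base case.} Take $X_0=\alpha A^*$. Then $X_0AA^\dagger = \alpha A^*AA^\dagger = \alpha A^* = X_0$ and $A^\dagger A X_0 = \alpha A^\dagger A A^* = X_0$. Also $AX_0=\alpha AA^*$ and $X_0A=\alpha A^*A$ are Hermitian, since $\alpha$ is real.

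\emph{Inductive step.} Assume (i)--(iv) hold for $X_n$. Write $X_{n+1}=X_n\mathcal{P}(AX_n)=\sum_j p_j X_n(AX_n)^j$, and note that $X_n(AX_n)^j = (X_nA)^jX_n$.

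\begin{itemize}[label={}]
\item For (i): $X_{n+1}AA^\dagger = \sum_j p_j (X_nA)^j X_nAA^\dagger = \sum_j p_j (X_nA)^jX_n = X_{n+1}$.
\item For (ii): $A^\dagger A X_{n+1} = \sum_j p_j (A^\dagger A X_n)(AX_n)^j = X_{n+1}$.
\item For (iii): $AX_{n+1} = \sum_j p_j (AX_n)^{j+1}$ is a real polynomial in the Hermitian matrix $AX_n$, so it is Hermitian; taking the adjoint termwise uses $\overline{p_j}=p_j$.
\item For (iv): $X_{n+1}A = \sum_j p_j (X_nA)^{j+1}$ is a real polynomial in the Hermitian matrix $X_nA$, hence Hermitian by the same argument.
\end{itemize}

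The only step needing care is the commutation identity $X_n(AX_n)^j=(X_nA)^jX_n$, which is what moves $AA^\dagger$ to the right of $X_n$ in (i) and converts $X_{n+1}A$ into a polynomial in $X_nA$ in (iv). It is immediate by associativity. Otherwise the argument is routine, and the main point to flag is that realness of $\beta$ and $\alpha$ is essential for the Hermitian properties.
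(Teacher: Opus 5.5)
Your proof is correct and follows the same route as the paper: induction on $n$, using only that $X_{n+1}=X_n\mathcal{P}(AX_n)$ with $\mathcal{P}$ a real-coefficient polynomial, with the base case settled by the Penrose equations. You are in fact more complete than the paper, which writes out only (ii) and (iii) and calls (i) and (iv) analogous; your identity $X_n(AX_n)^j=(X_nA)^jX_n$ is exactly what those two cases need, and you correctly observe that only $\beta,\alpha\in\mathbb{R}$ is used, not the interval restriction on $\beta$.
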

\begin{proof}
We prove \textup{(ii)} by induction. The proof of \textup{(i)} is analogous.\\
For $n=0$
\[
A^\dagger A X_0
= A^\dagger A (\alpha A^*)
= \alpha (A^\dagger A) A^*.
\]
Using $(A^\dagger A)^* = A^\dagger A$ and properties of the Moore–Penrose inverse,
\[
\alpha (A^\dagger A)^* A^*
= \alpha A^* (A^\dagger)^* A^*
= \alpha (A A^\dagger A)^*
= \alpha A^*
= X_0.
\]
Next, we assume that $A^\dagger A X_n = X_n$. We show that $A^\dagger A X_{n+1} = X_{n+1}$.\\
From \eqref{eq: simplified HEUN4},
\[
A^\dagger AX_{n+1} = \frac{1}{729} A^\dagger A X_n \, \mathcal{P}(A X_n),
\]
where 
\begin{align*}
\mathcal{P}(A X_n)&=
5854I-1024\beta I -18100AX_n
+4864\beta AX_n+28920(AX_n)^2
-9408\beta(AX_n)^2 \\
&-26285(AX_n)^3
+9488\beta(AX_n)^3 +13874(AX_n)^4
-5312\beta(AX_n)^4 -4134(AX_n)^5\\&
+1632\beta(AX_n)^5 +640(AX_n)^6
-256\beta(AX_n)^6 -40(AX_n)^7
+16\beta(AX_n)^7
.
\end{align*}
Then,
\begin{align*}
A^\dagger A X_{n+1}
&= \frac{1}{729} X_n \, \mathcal{P}(A X_n) \quad  \\
&= X_{n+1}.
\end{align*}
Hence, property (ii) holds for all $n \ge 0$.

Next, we use induction process to prove (iii).

For $n=0$,
\[
(AX_0)^* = (\alpha A A^*)^* = \alpha A A^* = AX_0.
\]
Assume $(AX_n)^* = AX_n$. Then,
\begin{align*}
(AX_{n+1})^*
&= \left(\frac{1}{729} A X_n \mathcal{P}(A X_n)\right)^* \\
&= \frac{1}{729} \mathcal{P}(A X_n)^* (A X_n)^*.
\end{align*}
Since $(AX_n)^* = AX_n$ and $\mathcal{P}(AX_n)$ is a polynomial in $AX_n$, all its powers are Hermitian under the induction assumption, hence
\[
(AX_{n+1})^* = AX_{n+1}.
\]
For (iv), the proof follows similarly by taking adjoints on the right-hand side and using the induction hypothesis, yielding
\[
(X_{n+1}A)^* = X_{n+1}A.
\]
\end{proof}
Thus, all four identities are preserved under the iteration, and we are now able to establish the convergence and the convergence order of the Heun-type matrix iterative methods for approximating the Moore–Penrose inverse.
\begin{theorem}\label{theo:Moore_pen_Heun4}
Let $A \in \mathbb{C}^{M \times N}$ be a rectangular complex matrix. Consider $\{X_n\}$ as a sequence generated by the iterative schemes \eqref{eq:Heun4_matrix}, where the initial approximation is chosen as $X_0 = \alpha A^*,$ with $\alpha < \frac{1}{\sigma_1^2}$, and let $\beta \in \left[\frac{25}{16}, \frac{5}{2}\right].$ 
Then, the sequence $\{X_n\}$ converges to the Moore-Penrose inverse $A^\dagger$ with order $p=4$. Additionally, the error satisfies
\begin{equation*}
\|e_{n+1}\|
\leq 
\|A^\dagger\| \, \|A\|^4 \, \|e_n\|^4.
\end{equation*}
In particular, for $\beta=\frac{25}{16}$, the iterative scheme ${X_n}$ converges to the Moore-Penrose inverse $A^{\dagger}$ with convergence order $p=5$, and the error satisfies
\begin{equation*}
\|e_{n+1}\|
\leq 
\|A^\dagger\| \, \|A\|^5 \, \|e_n\|^5,
\end{equation*}
where $e_n = X_n - A^\dagger$.
\end{theorem}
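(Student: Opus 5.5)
The plan is to mirror the proof of Theorem \ref{theo:convergence_heun4}, but to work with the \emph{projected} residual instead of $I-AX_n$. Because $A$ is rectangular, $AX_n\to AA^\dagger$, not $I$. So I define
\[
F_n = AA^\dagger - AX_n, \qquad \text{equivalently}\quad E_n := I - AX_n .
\]
The key identities come from the preceding lemma: $X_nAA^\dagger=X_n$ and $A^\dagger AX_n=X_n$. From these, $AA^\dagger$ is a Hermitian idempotent that commutes with $AX_n$, with $AA^\dagger AX_n = AX_n = AX_nAA^\dagger$.

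\textbf{Step 1 (algebraic error recursion).} Write $P:=AA^\dagger$ and $B_n=AX_n$. Then $B_n=P-F_n$ with $PF_n=F_nP=F_n$, so $P$ acts as an identity on the algebra generated by $F_n$. Substituting $B_n=P-F_n$ into $B_{n+1}=\frac{1}{729}B_n\mathcal P(B_n)$ is the same polynomial computation that produced \eqref{eq:residual_heun4_without_norm}, with $I$ replaced by $P$. Hence
\[
F_{n+1}=\Big(-\tfrac{25}{27}+\tfrac{16\beta}{27}\Big)F_n^4+\tfrac{10}{27}F_n^5+\Big(\tfrac{86}{81}-\tfrac{32\beta}{81}\Big)F_n^6+\Big(\tfrac{320}{729}-\tfrac{128\beta}{729}\Big)F_n^7+\Big(\tfrac{40}{729}-\tfrac{16\beta}{729}\Big)F_n^8 .
\]
The constant term vanishes because the scalar polynomial satisfies $1-\frac{1}{729}t\mathcal P(t)=(1-t)^4(\cdots)$.

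\textbf{Step 2 (initial smallness via spectral radius).} I need $\rho(F_0)<1$. Use the SVD of $A$ with singular values $\sigma_1\ge\dots\ge\sigma_r>0$. Then $F_0=P-\alpha AA^*$ is Hermitian with eigenvalues $1-\alpha\sigma_i^2$ on the range of $P$ and $0$ elsewhere. So $\rho(F_0)=\max_i|1-\alpha\sigma_i^2|<1$, provided $0<\alpha<2/\sigma_1^2$; the hypothesis $\alpha<1/\sigma_1^2$ is meant with $\alpha>0$.

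Since every $F_n$ is Hermitian (by (iii) of the lemma and $P^*=P$), the spectral norm equals the spectral radius, and Lemma \ref{lemma:spectral_radius_norm} is not even needed. Alternatively, Lemma \ref{lemma:spectral_radius_only} applied with the idempotent $P$ gives $\rho(F_n)=\rho(PF_n)\le\rho(I-B_n)$, and Lemma \ref{lemma:spectral_radius_norm} then gives a norm with $\|F_0\|<1$.

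Taking norms in Step 1 and using the constant $w=1$ computed in Theorem \ref{theo:convergence_heun4} for $\beta\in[\frac{25}{16},\frac52]$, I get $\|F_{n+1}\|\le\|F_n\|^4$. Therefore $\|F_n\|\le\|F_0\|^{4^n}\to0$.

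\textbf{Step 3 (transfer to $e_n=X_n-A^\dagger$).} Using (ii) of the lemma, $X_n=A^\dagger AX_n$, and also $A^\dagger=A^\dagger AA^\dagger$. Hence
\[
e_n=A^\dagger(AX_n-AA^\dagger)=-A^\dagger F_n .
\]
This gives $\|e_{n+1}\|\le\|A^\dagger\|\,\|F_{n+1}\|\le\|A^\dagger\|\,\|F_n\|^4$. Since $F_n=-Ae_n$, we have $\|F_n\|\le\|A\|\|e_n\|$, which yields the stated bound $\|e_{n+1}\|\le\|A^\dagger\|\|A\|^4\|e_n\|^4$ and shows $X_n\to A^\dagger$.

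For $\beta=\frac{25}{16}$ the leading coefficient is zero. Every remaining coefficient then has modulus summing to $\frac{10}{27}+\frac{4}{9}+\frac{120}{729}+\frac{15}{729}=1$, so $\|F_{n+1}\|\le\|F_n\|^5$, and the same transfer gives the fifth-order bound.

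The main obstacle is Step 1: justifying that $P=AA^\dagger$ behaves as the identity, which needs $PB_n=B_nP=B_n$. This follows from (i) and (ii) of the lemma, but I would verify it carefully, since it is what allows the scalar factorization to carry over unchanged.
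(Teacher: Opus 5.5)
Your argument is correct and follows the paper's skeleton. The paper also tracks $Ae_n=-F_n$ and obtains the same degree-eight recursion by expanding $\big((I-AA^\dagger)-Ae_n\big)^q=(I-AA^\dagger)+(-1)^q(Ae_n)^q$; your observation that $P=AA^\dagger$ acts as the unit for $F_n$ is the same computation. It then uses $w=1$ and transfers back via $e_{n+1}=A^\dagger Ae_{n+1}$ and $\|Ae_n\|\le\|A\|\,\|e_n\|$.

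Where you genuinely differ is Step 2, and your version is the sounder one. The paper applies Lemma~\ref{lemma:spectral_radius_only} with $Q=I-AX_0$, asserts $\rho(I-\alpha AA^*)<1$, and then uses Lemma~\ref{lemma:spectral_radius_norm} to build a norm with $\|Ae_0\|<1$. However, when $\operatorname{rank}(A)<M$ (for instance, for any tall matrix), $I-\alpha AA^*$ has the eigenvalue $1$, so that chain only gives $\rho(Ae_0)\le 1$.

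Your SVD computation of the spectrum of the Hermitian matrix $F_0=AA^\dagger-\alpha AA^*$ gives $\rho(F_0)=\max_i|1-\alpha\sigma_i^2|<1$ with no rank assumption, and it does so for the larger range $0<\alpha<2/\sigma_1^2$. Working in the spectral norm also makes the mixed-size estimates $\|e_{n+1}\|\le\|A^\dagger\|\,\|F_{n+1}\|$ and $\|F_n\|\le\|A\|\,\|e_n\|$ immediate. By contrast, the norm produced by Lemma~\ref{lemma:spectral_radius_norm} is only a norm on $M\times M$ matrices, so the paper's analogous estimates need extra justification.

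For the same reason, drop your ``alternative'' through Lemma~\ref{lemma:spectral_radius_only}: bounding $\rho(F_0)$ by $\rho(I-B_0)$ reproduces exactly the paper's defect. You are right that $\alpha>0$ must be assumed.

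Two cosmetic remarks. First, $F_n$ and $E_n$ are not equivalent, since $E_n=F_n+(I-AA^\dagger)$. Second, the fact that every $F_n$ is Hermitian does not require part (iii) of the lemma: $F_{n+1}$ is a real-coefficient polynomial in $F_n$, and $F_0$ is Hermitian.
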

\begin{proof}
Define $e_n = X_n - A^\dagger$ as the error matrix at the $n$-th iteration, and define the residual matrix by $E_n = I - AX_n.$
Then,
\begin{align*}
Ae_{n+1}
&= AX_{n+1} - AA^\dagger \\
&= -E_{n+1} + I - AA^\dagger.
\end{align*}
Using \eqref{eq:residual_heun4_without_norm}, we obtain
\begin{align*}
Ae_{n+1}
&=
\left(\frac{25}{27}-\frac{16\beta}{27}\right)E_n^4
-\frac{10}{27}E_n^5
-\left(\frac{86}{81}-\frac{32\beta}{81}\right)E_n^6\\&
-\left(\frac{320}{729}-\frac{128\beta}{729}\right)E_n^7
-\left(\frac{40}{729}-\frac{16\beta}{729}\right)E_n^8
+ I - AA^\dagger.
\end{align*}
Since
\[
E_n = I - AX_n
      = (I-AA^\dagger) - Ae_n,
\]
it follows that
\begin{equation}\label{eq:Huen4_MP_before_iden_4}
\begin{aligned}
Ae_{n+1}
={}&
\left(\frac{25}{27}-\frac{16\beta}{27}\right)
\big((I-AA^\dagger)-Ae_n\big)^4
-\frac{10}{27}
\big((I-AA^\dagger)-Ae_n\big)^5\\&
-\left(\frac{86}{81}-\frac{32\beta}{81}\right)
\big((I-AA^\dagger)-Ae_n\big)^6
-\left(\frac{320}{729}-\frac{128\beta}{729}\right)
\big((I-AA^\dagger)-Ae_n\big)^7
\\&-\left(\frac{40}{729}-\frac{16\beta}{729}\right)
\big((I-AA^\dagger)-Ae_n\big)^8
+ I - AA^\dagger.
\end{aligned}
\end{equation}
Using the relation (see \cite{Cordero_2021})
\[
(I-AA^\dagger)^k = I-AA^\dagger,
\qquad
(I-AA^\dagger)Ae_n = 0,
\qquad
\forall k \in \mathbb{N},
\]
the above expression simplifies considerably. In particular,
\[
\big((I-AA^\dagger)-Ae_n\big)^q
=
(I-AA^\dagger) + (-1)^q(Ae_n)^q,
\]
for $q=4,5,6,7,8$.

Substituting these identities into the  equation \eqref{eq:Huen4_MP_before_iden_4} yields
\begin{align*}
Ae_{n+1}
={}&
\left(\frac{25}{27}-\frac{16\beta}{27}\right)(Ae_n)^4
+\frac{10}{27}(Ae_n)^5
-\left(\frac{86}{81}-\frac{32\beta}{81}\right)(Ae_n)^6
\\&+\left(\frac{320}{729}-\frac{128\beta}{729}\right)(Ae_n)^7
-\left(\frac{40}{729}-\frac{16\beta}{729}\right)(Ae_n)^8.
\end{align*}
Taking norms on both sides, we obtain
\begin{equation} \label{eq:MP_error_bound_4}
\begin{aligned}
\|Ae_{n+1}\|
\leq {} &
\|Ae_n\|^4
\Bigg(
\left|\frac{25}{27}-\frac{16\beta}{27}\right|
+\frac{10}{27}\|Ae_n\|
+\left|\frac{86}{81}-\frac{32\beta}{81}\right|
\|Ae_n\|^2
\\&+\left|\frac{320}{729}-\frac{128\beta}{729}\right|
\|Ae_n\|^3
+\left|\frac{40}{729}-\frac{16\beta}{729}\right|
\|Ae_n\|^4
\Bigg).
\end{aligned}
\end{equation}
Now consider $P=AA^\dagger$ and $Q=I-AX_0$. Hence,
\[P^2=(AA^\dagger)(AA^\dagger)=AA^\dagger=P.\]
Moreover,
\begin{equation}
    \begin{aligned}
        PQ&=(AA^\dagger)(I-AX_0)\\&
        =AA^\dagger-AA^\dagger AX_0\\&
        =AA^\dagger-AX_0\\&
        =AA^\dagger-AX_0AA^\dagger\\&
        =(I-AX_0)AA^\dagger\\&
        =QP.
     \label{eq:PQ=QP}
     \end{aligned}
\end{equation}
From Lemma \ref{lemma:spectral_radius_only} and equation \eqref{eq:PQ=QP}, we obtain
    \begin{align*}
        \rho((A A ^\dagger) (I-AX_0))&\leq\rho(I-AX_0)\\&
        =\rho(I-\alpha AA^*)\\&
        =\max_{1\leq n \leq q}\lvert1-\lambda_n(\alpha A A^*) \rvert
        < 1.
    \end{align*}
where $q$ denotes the number of eigenvalues of the matrix $\alpha A A^*.$

From Lemma \ref{lemma:spectral_radius_norm}, given $\epsilon >0$,  there exists a matrix norm $\|.\|$ that satisfies the following inequality:
\[
\|A(A^\dagger-X_0)\|\leq \rho(A(A^\dagger-X_0))+\epsilon <1.
\]
Hence, $\|Ae_0\|<1.$ It is easy to prove that $\|Ae_n\|<1.$ Then, from \eqref{eq:MP_error_bound_4}, we have
\begin{equation*}
\|Ae_{n+1}\|
\leq 
\|Ae_n\|^4
\Bigg(
\left|\frac{25}{27}-\frac{16\beta}{27}\right|
+\frac{10}{27}
+\left|\frac{86}{81}-\frac{32\beta}{81}\right|
+\left|\frac{320}{729}-\frac{128\beta}{729}\right|
+\left|\frac{40}{729}-\frac{16\beta}{729}\right|
\Bigg).
\end{equation*}
Define
\begin{equation*}
w
=
\left|\frac{25}{27}-\frac{16\beta}{27}\right|
+\frac{10}{27}
+\left|\frac{86}{81}-\frac{32\beta}{81}\right|
+\left|\frac{320}{729}-\frac{128\beta}{729}\right|
+\left|\frac{40}{729}-\frac{16\beta}{729}\right|.
\end{equation*}
Hence,
\[
\|Ae_{n+1}\| \leq w \|Ae_n\|^4.
\]

For $\beta \in \left[\frac{25}{16},\frac{5}{2}\right],$
we obtain $w=1$. Therefore, we can write
\[
\|Ae_{n+1}\| \leq \|Ae_n\|^4.
\]
Finally, using the identity
\[
e_{n+1}=A^\dagger Ae_{n+1},
\]
we obtain
\[
\|e_{n+1}\|
\leq
\|A^\dagger\|\,\|Ae_{n+1}\|.
\]
Moreover,
\[
\|Ae_n\| \leq \|A\|\,\|e_n\|.
\]
Consequently,
\begin{equation*}
\|e_{n+1}\|
\leq 
\|A^\dagger\| \, \|A\|^4 \, \|e_n\|^4.
\end{equation*}
Hence,
\[
\|e_{n+1}\| \to 0
\;\;\text{as} \;\; n\to\infty,
\]
and therefore the sequence $\{X_n\}$ converges to the Moore--Penrose inverse $A^\dagger$ with fourth-order convergence.

For $\beta=\frac{25}{16}$,  equation \eqref{eq:Heun4_matrix} becomes
\begin{equation}
\begin{aligned}
    X_{n+1}=&\frac{1}{243}X_n (1418 + A X_n (-3500 
      +A X_n (4740+ A X_n (-3820  \\&+A X_n (1858 + A X_n (-528 - 5 A X_n (-16 + A X_n))))))).
      \end{aligned}
\end{equation}
by following the same procedure, we obtain the error inequality
\begin{equation*}
\|e_{n+1}\|
\leq 
\|A^\dagger\| \, \|A\|^5 \, \|e_n\|^5.
\end{equation*}
\end{proof}
 To determine the convergence properties of the iterative scheme defined in equation \eqref{eq:Heun4_matrix} for approximating the Drazin inverse, we first present several auxiliary results that will be useful in the convergence analysis.
\begin{proposition}[{\cite{Artidiello_2020}}]
\label{proposition:projector}
Let \(P_{A,B}\) be the projector onto a subspace \(A\) along a subspace \(B\). Then:
\begin{enumerate}[label=\roman*.]
    \item $ P_{A,B}C=C$  if and only if
   $\mathcal{R}(C)\subseteq A,$
    \item $CP_{A,B}=C$ if and only if $\mathcal{N}(C)\supseteq B.$
\end{enumerate}
\end{proposition}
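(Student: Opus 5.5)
This proposition is a standard fact about oblique projectors, so the plan is to prove it directly from the defining properties of $P_{A,B}$. These properties are: $\mathbb{C}^N = A \oplus B$; $P_{A,B}$ is idempotent; $\mathcal{R}(P_{A,B}) = A$; $\mathcal{N}(P_{A,B}) = B$; and $P_{A,B}$ acts as the identity on $A$. Each item is an "if and only if", and I will prove the two directions separately, working column by column for (i) and through null spaces for (ii).

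For (i), first assume $P_{A,B}C = C$. Then every column $c$ of $C$ satisfies $c = P_{A,B}c \in \mathcal{R}(P_{A,B}) = A$, so $\mathcal{R}(C)\subseteq A$. Conversely, assume $\mathcal{R}(C)\subseteq A$. Each column of $C$ lies in $A$, and $P_{A,B}$ restricted to $A$ is the identity, because any $a\in A$ decomposes as $a + 0$ with $0 \in B$. Hence $P_{A,B}C = C$.

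For (ii), first assume $CP_{A,B} = C$. Take $x\in B$; then $P_{A,B}x = 0$, so $Cx = CP_{A,B}x = 0$, which gives $B\subseteq \mathcal{N}(C)$. Conversely, assume $B\subseteq\mathcal{N}(C)$. Write an arbitrary $x$ as $x = a + b$ with $a\in A$ and $b\in B$, so that $P_{A,B}x = a$. Then $Cx = Ca + Cb = Ca = CP_{A,B}x$ for every $x$, hence $CP_{A,B} = C$.

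No step here is a real obstacle. The only point needing care is to state precisely that $\mathcal{R}(P_{A,B})=A$ and $\mathcal{N}(P_{A,B})=B$ follow from the direct-sum decomposition, since both converse directions depend on these facts. Alternatively, (ii) can be derived from (i) by taking conjugate transposes: $P_{A,B}^* = P_{B^\perp,A^\perp}$ and $\mathcal{R}(C^*) = \mathcal{N}(C)^\perp$. I will use the direct argument above because it is shorter.
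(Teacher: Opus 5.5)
Your proof is correct and complete. In (i), the forward direction uses $\mathcal{R}(P_{A,B})\subseteq A$ and the converse uses the fact that $P_{A,B}$ fixes $A$. In (ii), the forward direction uses $P_{A,B}x=0$ for $x\in B$ and the converse uses $P_{A,B}(a+b)=a$. Your adjoint route through $P_{A,B}^*=P_{B^\perp,A^\perp}$ and $\mathcal{R}(C^*)=\mathcal{N}(C)^\perp$ is also valid. The paper gives no proof of its own; it quotes the result from Artidiello et al., so the only comparison to make is that your direct argument from $\mathbb{C}^N=A\oplus B$ is the standard proof.
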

The above proposition is helpful for establishing the convergence analysis of the Heun iterative scheme for approximating the Drazin inverse matrix. 
\begin{theorem}\label{theo:Drazin_Heun4}
Let $A \in \mathbb{C}^{N \times N}$ be a singular square matrix, and let $\{X_n\}$ be the sequence generated by the iterative scheme \eqref{eq:Heun4_matrix}, with the initial approximation $X_0$, and 
$\beta \in \left[\frac{25}{16},\frac{5}{2}\right]$. The sequence $\{X_n\}$ converges to the Drazin inverse $A^D$ with fourth-order convergence. Moreover, the sequence generated by $\{X_n\}$ satisfies the error estimate for computing the Drazin inverse, which is
 \begin{align*}
\|e_{n+1}\|
\leq &
\|A^D\|\,\|A\|^4\,\|e_n\|^4
\Bigg(
\left|-\frac{25}{27}+\frac{16\beta}{27}\right|
+\frac{10}{27}\|A\|\|e_n\|
+\left|\frac{86}{81}-\frac{32\beta}{81}\right|
\|A\|^2\|e_n\|^2
\\
&\quad
+\left|\frac{320}{729}-\frac{128\beta}{729}\right|
\|A\|^3\|e_n\|^3
+\left|\frac{40}{729}-\frac{16\beta}{729}\right|
\|A\|^4\|e_n\|^4
\Bigg).
\end{align*}
In particular, for $\beta=\frac{25}{16}$, the sequence $\{X_n\}$ converges to $A^{D}$ with order $p=5$, and $\{X_n\}$ must satisfy the error estimate for approximating the Drazin inverse of $A$
\begin{align*}
\|e_{n+1}\|
\leq {} &
\|A^D\| \, \|A\|^5 \, \|e_n\|^5
\Bigg(
\frac{10}{27}
+\frac{4}{9}\|A\|\|e_n\|
+\frac{120}{729}\|A\|^2\|e_n\|^2
+\frac{15}{729}\|A\|^3\|e_n\|^3
\Bigg),
\end{align*}
\end{theorem}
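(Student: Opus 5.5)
The approach mirrors the proofs of Theorem \ref{theo:convergence_heun4} and Theorem \ref{theo:Moore_pen_Heun4}, with the projector $AA^D$ playing the role of $AA^\dagger$. Let $k=\operatorname{ind}(A)$. Since the statement does not specify $X_0$, I will work with the standard choice $X_0=\alpha A^{l}$ with $l\ge k$, or $X_0=\alpha A^{l}A^*$, and $\alpha$ small enough that $\rho\big((I-AX_0)AA^D\big)<1$, i.e. $\max_i|1-\alpha\lambda_i^{l+1}|<1$ over the nonzero eigenvalues $\lambda_i$ of $A$. This hypothesis on $X_0$ is implicit and must be stated.

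The first step is to prove by induction the invariance relations
\[
A^DAX_n=X_n,\qquad X_nAA^D=X_n,\qquad AX_n=X_nA.
\]
These hold for $X_0=\alpha A^l$, because $AA^DA^l=A^l$ for $l\ge k$; this is where Proposition \ref{proposition:projector} enters, since $\mathcal R(A^l)=\mathcal R(A^k)$ and $AA^D=P_{\mathcal R(A^k),\mathcal N(A^k)}$. They propagate through \eqref{eq: simplified HEUN4}, because $X_{n+1}=\frac{1}{729}X_n\mathcal P(AX_n)$ is $X_n$ multiplied by a polynomial in $AX_n$, which commutes with $A$ and $A^D$.

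Next set $e_n=X_n-A^D$ and $E_n=I-AX_n$. Then
\[
E_n=(I-AA^D)-Ae_n,
\]
and the invariance relations give $(I-AA^D)Ae_n=Ae_n(I-AA^D)=0$ together with $(I-AA^D)^q=I-AA^D$. So, exactly as in the Moore--Penrose case, $\big((I-AA^D)-Ae_n\big)^q=(I-AA^D)+(-1)^q(Ae_n)^q$. Substituting into the residual identity \eqref{eq:residual_heun4_without_norm} and using $Ae_{n+1}=I-AA^D-E_{n+1}$, the projector terms cancel, because the coefficients of $E_n^4,\dots,E_n^8$ sum to one. Checking this sum is a routine arithmetic verification; it mirrors the cancellation in Theorem \ref{theo:Moore_pen_Heun4}. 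What remains is the polynomial identity
\[
Ae_{n+1}=\Big(\tfrac{25}{27}-\tfrac{16\beta}{27}\Big)(Ae_n)^4-\tfrac{10}{27}(Ae_n)^5\cdots
\]
with the same coefficients up to sign.

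For convergence, apply Lemma \ref{lemma:spectral_radius_only} with $P=AA^D$ and $Q=I-AX_0$. These commute by the invariance relations, and $Ae_0=-(I-AA^D)Q$-type identities give $\rho(Ae_0)=\rho(PQ)\le\rho(Q)$ restricted to the nonzero spectrum, so $\rho(Ae_0)<1$. Lemma \ref{lemma:spectral_radius_norm} then provides a norm with $\|Ae_0\|<1$. The constant $w$ is the same as before and equals $1$ for $\beta\in[\frac{25}{16},\frac52]$, which gives $\|Ae_{n+1}\|\le\|Ae_n\|^4$ and hence $Ae_n\to0$.

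Finally, $e_{n+1}=A^DAe_{n+1}$ holds by the invariance relations and $A^DAA^D=A^D$. Taking norms gives $\|e_{n+1}\|\le\|A^D\|\|Ae_{n+1}\|$, and bounding $\|Ae_n\|\le\|A\|\|e_n\|$ termwise yields the stated estimate. For $\beta=\frac{25}{16}$ the $E_n^4$ coefficient vanishes, and factoring out $\|A\|^5\|e_n\|^5$ gives the fifth-order bound, just as in Theorem \ref{theo:convergence_heun4}.

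The main obstacle is the spectral step. $Q=I-AX_0$ has eigenvalue $1$ on $\mathcal N(A^k)$, so $\rho(Q)=1$, and Lemma \ref{lemma:spectral_radius_only} alone does not give a strict bound. I would instead compute $\rho(PQ)$ directly. $PQ$ acts as $I-\alpha A^{l+1}$ on $\mathcal R(A^k)$ and as zero on $\mathcal N(A^k)$, so $\rho(PQ)=\max|1-\alpha\lambda_i^{l+1}|<1$ under the assumed choice of $\alpha$.
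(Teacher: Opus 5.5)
Your proof is correct. It shares the paper's error-estimate step, but it establishes convergence by a different route. The shared step uses the projector identities to turn \eqref{eq:residual_heun4_without_norm} into $Ae_{n+1}=\sum_{q=4}^{8}(-1)^{q+1}c_q(Ae_n)^q$, where $c_q$ is the coefficient of $E_n^q$, and then uses $e_{n+1}=A^DAe_{n+1}$.

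\textbf{How the paper proves convergence.} It uses the core--nilpotent decomposition $A=V\operatorname{diag}(P,Q)V^{-1}$. With $X_0=\alpha A^l$, it shows by induction that $X_n=V\operatorname{diag}(X_{n,P},0)V^{-1}$, so the scheme acts only on the invertible block. The recursion for $E_{n,P}=I_r-PX_{n,P}$, together with $\|E_{0,P}\|<1$, then gives $X_{n,P}\to P^{-1}$, exactly as in Theorem~\ref{theo:convergence_heun4}. Only afterwards does it derive $X_nAA^D=X_n=A^DAX_n$, from range/null-space inclusions and Proposition~\ref{proposition:projector}.

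\textbf{How you prove it.} You obtain the invariance relations by direct algebraic induction. You then extract convergence from the $Ae_n$ recursion itself, using the adapted norm of Lemma~\ref{lemma:spectral_radius_norm}. The splitting $\mathbb{C}^N=\mathcal{R}(A^k)\oplus\mathcal{N}(A^k)$ enters only once, to compute $\rho(Ae_0)$.

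\textbf{What each route buys.} The paper's route makes the inertness of the nilpotent part explicit and reuses the nonsingular argument verbatim. Yours is uniform with the Moore--Penrose proof, gives convergence and the estimate from a single identity, and needs only a spectral condition on the core block. Your remark that $\rho(I-AX_0)=1$ is exactly right. It is why Lemma~\ref{lemma:spectral_radius_only} alone yields nothing here. It is also why the paper's closing appeal to $\|I-AX_0\|<1$ cannot hold for singular $A$; its convergence really rests on the block argument. Your direct computation of $\rho(PQ)$ avoids this pitfall.

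\textbf{Details to fix in a write-up.}
\begin{itemize}
\item In fact $PQ=AA^D-AX_0=-Ae_0$ exactly, so $\rho(Ae_0)=\rho(PQ)$ with no restriction to the nonzero spectrum. Your identity $Ae_0=-(I-AA^D)Q$ is false, since $(I-AA^D)Q=I-AA^D$.
\item The coefficient of $(Ae_n)^5$ is $+\frac{10}{27}$, not $-\frac{10}{27}$. This is harmless once norms are taken.
\item Drop the alternative $X_0=\alpha A^lA^*$. It generally violates both $X_0AA^D=X_0$ and $AX_0=X_0A$; for instance, take $A=\begin{pmatrix}1&1\\0&0\end{pmatrix}$ and $l=1$, where $A^D=A$.
\item The phrase ``$\alpha$ small enough'' only yields $\max_i|1-\alpha\lambda_i^{l+1}|<1$ when every $\lambda_i^{l+1}$ has positive real part. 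State that inequality itself as the hypothesis on $X_0$, as the paper in effect does with $\|E_{0,P}\|<1$.
\end{itemize}
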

\begin{proof}
Let A be a $N \times N$ singular matrix, let $k=\operatorname{ind}(A)$ and $r=\operatorname{rank}(A^k)$; then there exists a non-singular matrix $V$ such that 
\[
A=V\begin{pmatrix}
    P &0\\0 & Q
\end{pmatrix}V^{-1},
\]
where $P$ is a non-singular matrix of size $r \times r$ and $Q$ is a nilpotent matrix of size $(N-r) \times (N-r)$ (see \cite{Ben-Israel_2003}). Its Drazin inverse is 
\[
A^D=V\begin{pmatrix}
    P^{-1} &0\\0 & 0
\end{pmatrix}V^{-1}.
\]
By choosing the initial approximation $X_0=\alpha A^l$, where $l \geq k=ind(A)$, we obtain
\[
X_0=V\begin{pmatrix}
    \alpha P^l &0\\0 & \alpha Q^l
\end{pmatrix}V^{-1}=V\begin{pmatrix}
    X_{0,P} &0 \\0&0
\end{pmatrix}V^{-1},
\]
which shows that the initial approximation $X_0$ preserves the structure of Drazin inverse.  Suppose that 
\[
X_n=V\begin{pmatrix}
     X_{n,P} &0\\0 & 0
\end{pmatrix}V^{-1},
\]
then
\[
AX_n=V\begin{pmatrix}
     PX_{n,P} &0\\0 & 0
\end{pmatrix}V^{-1},
\]
where $P$ is a non-singular matrix. For every power $q$, we have
\[
(AX_n)^q=V\begin{pmatrix}
     (PX_{n,P})^q &0\\0 & 0
\end{pmatrix}V^{-1}.
\]
From \eqref{eq: simplified HEUN4}, we can write
\[\mathcal{P}(A X_n)=V\begin{pmatrix}
    \mathcal{P}(PX_{n,P})&0\\0 &  \frac{5854-1024\beta}{729}I
\end{pmatrix},\]
where 
\begin{align*}
\mathcal{P}(A X_n)=&
5854I_r-1024\beta I_r -18100PX_{n,P}
+4864\beta PX_n+28920(PX_{n,P})^2
\\&-9408\beta(PX_{n,P})^2-26285(PX_{n,P})^3
+9488\beta(PX_{n,P})^3 +13874(PX_{n,P})^4
\\&-5312\beta(PX_{n,P})^4 -4134(PX_{n,P})^5
+1632\beta(PX_{n,P})^5 +640(PX_{n,P})^6
\\&-256\beta(PX_{n,P})^6 -40(PX_{n,P})^7
+16\beta(PX_{n,P})^7.
\end{align*}

Then the final expression of equation \eqref{eq: simplified HEUN4} becomes
\[X_{n+1}=\begin{pmatrix}
    X_{n,P}\mathcal{P}(PX_{n,P})&0\\0 &  0
    \end{pmatrix},\]
    where $X_{n,P}\mathcal{P}(A_1X_{n,P})$ is a non-singular matrix that can be written as
        \begin{align*}
        X_{n+1,P}=\frac{1}{729}X_{n,P}\Big[&5854I_r-1024\beta I_r -18100PX_{n,P}
+4864\beta PX_n\\&+28920(PX_{n,P})^2
-9408\beta(PX_{n,P})^2-26285(PX_{n,P})^3
\\&+9488\beta(PX_{n,P})^3 +13874(PX_{n,P})^4
-5312\beta(PX_{n,P})^4 \\&-4134(PX_{n,P})^5
+1632\beta(PX_{n,P})^5 +640(PX_{n,P})^6\\&
-256\beta(PX_{n,P})^6 -40(PX_{n,P})^7
+16\beta(PX_{n,P})^7\Big].
        \end{align*}
   By considering $E_{n,P}=I_r-PX_{n,P}$, the above expression can be written as
\begin{equation}
\begin{aligned}
E_{n+1,P}
=&
\left(-\frac{25}{27}+\frac{16\beta}{27}\right)E_{n,P}^4
+ \frac{10}{27}E_{n,P}^5
+ \left(\frac{86}{81}-\frac{32\beta}{81}\right)E_{n,P}^6 
\\&+ \left(\frac{320}{729}-\frac{128\beta}{729}\right)E_{n,P}^7
+ \left(\frac{40}{729}-\frac{16\beta}{729}\right)E_{n,P}^8.
\end{aligned}
\end{equation}
By choosing the initial approximation $X_0$ as $\|E_{0,P}\|=\|I_r-PX_{n+1,P}\|\leq 1$, it is easy to prove $\|E_{n,P}\|\leq 1,$ then we have
\begin{equation*}
\|E_{n+1,P}\|\leq w\|E_{n,P}\|^4\leq 
w^{\frac{4^n-1}{3}}\|E_{0,P}\|^{4^{n}} ,\quad\quad \forall n \geq 0,
\end{equation*}
where 
\[
w =
\left|-\frac{25}{27}+\frac{16\beta}{27}\right|
+ \frac{10}{27}
+ \left|\frac{86}{81}-\frac{32\beta}{81}\right|
+ \left|\frac{320}{729}-\frac{128\beta}{729}\right|
+ \left|\frac{40}{729}-\frac{16\beta}{729}\right|.
\]
For $\beta \in \left[\frac{25}{16}, \frac{5}{2}\right]$, a direct evaluation shows that $w = 1$. Because we choose $\|E_{0,P}\|<1$, then $\|E_{0,P}\|^{4^{n+1}} \to 0$ when $n \to \infty$. Consequently $\|E_{n+1,P}\| \leq \|E_{0,P}\|^{4^{n+1}}$.

Hence,  $\|E_{n,P}\| \to 0 \;\; \text{as} \;n \to \infty$, 
which implies $X_{n,P} \to P^{-1}.$
Hence, the convergence of the sequence $\{X_n\}$ is  established.
Since $X_0=\alpha A^l$, where $l \geq k=ind(A)$, we obtain
\[
\mathcal{R}(X_0)=\mathcal{R}(A^l)\subseteq\mathcal{R}(A^k).
\]
Therefore, we can state that
\[
\mathcal{R}(X_n)\subseteq\mathcal{R}(X_{n-1})\subseteq \mathcal{R}(X_{n-2})\subseteq\dots\subseteq\mathcal{R}(X_{0})\subseteq \mathcal{R}(A^k),
\]
and consequently,
\begin{equation}
\mathcal{R}(X_n)\subseteq\mathcal{R}(A^k),
\qquad
\forall n\geq0.
\label{eq:range_drazin_4}
\end{equation}
Similarly
\begin{equation}
\mathcal{N}(X_0)\supseteq\mathcal{N}(A^k),
\qquad
\forall n\geq0.
\end{equation}
Assume that
\[t \in \mathcal{N}(A^k),\]
then, after simplification
\[X_0t=\alpha A^l t =0.\]
This property is preserved by the iteration if
\[X_nt=0, \qquad AX_nt=0.\]
From \eqref{eq: simplified HEUN4}, we deduce $X_{n+1}t=0$. Consequently,
\[t \in \mathcal{N}({X_{n+1}}).\]
By induction,
\begin{equation}
    \label{eq:null_space_drazin_4}
    \mathcal{N}(A^k) \subseteq \mathcal{N}(X_n), \qquad n \geq 0.
\end{equation}
From \cite{Ben-Israel_2003}, we have
\begin{equation}
AA^D=A^DA=
P_{\mathcal{R}(A^k),\mathcal{N}(A^k)},
\label{eq:projector_RN_drazin_4}
\end{equation}
where $P_{\mathcal{R}(A^k),\mathcal{N}(A^k)}$ 
denotes the projector onto $\mathcal{R}(A^k)$ along $\mathcal{N}(A^k)$.\\
By employing Proposition \ref{proposition:projector} together with equations \eqref{eq:range_drazin_4}, \eqref{eq:null_space_drazin_4}, and \eqref{eq:projector_RN_drazin_4}, we conclude that
\begin{equation}\label{eq:Drazin_lemma_proof_1}
X_nAA^D=X_n=A^DAX_n,
\qquad
\forall n\geq0.
\end{equation}
Let $e_n = X_n - A^D$
denote the error matrix at the $n$-th iteration. Then
\begin{equation*}
\begin{aligned}
    e_n&=X_n-A^D\\&
    =A^D(I-AX_n)\\&
    =A^DE_n.
\end{aligned}
\end{equation*}
Taking norms on both sides and simplifying yields
\begin{equation}
    \|e_n\|\leq \|A^D\|\,\|E_n\|\leq w^{\frac{4^n-1}{3}}\|A^D\|\,\|E_0\|^{4^{n}} ,\quad\quad \forall n \geq 0. 
\end{equation}
We can write this as
\[\|A^D-X_n\|\leq w^{\frac{4^n-1}{3}}\|A^D\|\,\|I-AX_0\|^{4^{n}} ,\quad\quad \forall n \geq 0.\]
Hence, the expression stated in the theorem is satisfied. From the initial condition, we have assumed that $\|I-AX_0\|<1$. Then, $\|I-AX_0\|^{4^n} \to 0$ as $n \to \infty$. 
Hence, $\|A^D-X_n\|\to 0,$ and consequently $X_n \to A^D.$ So,  the convergence is proven. 
Moreover,
\begin{align*}
Ae_{n+1}
&= AX_{n+1} - AA^D \\
&= -E_{n+1} + I - AA^D.
\end{align*}
Using \eqref{eq:residual_heun4_without_norm}, we obtain
\begin{align*}
Ae_{n+1}
&={}
\left(\frac{25}{27}-\frac{16\beta}{27}\right)E_n^4
-\frac{10}{27}E_n^5
-\left(\frac{86}{81}-\frac{32\beta}{81}\right)E_n^6
\\
&
-\left(\frac{320}{729}-\frac{128\beta}{729}\right)E_n^7
-\left(\frac{40}{729}-\frac{16\beta}{729}\right)E_n^8
+ I - AA^D.
\end{align*}
Since
\[
E_n = I - AX_n
      = (I-AA^D)-Ae_n,
\]
it follows that
\begin{equation}
\begin{aligned}\label{eq:Ae(n+1)_drazin4}
Ae_{n+1}
&=
\left(\frac{25}{27}-\frac{16\beta}{27}\right)
\big((I-AA^D)-Ae_n\big)^4
-\frac{10}{27}
\big((I-AA^D)-Ae_n\big)^5
\\
&
-\left(\frac{86}{81}-\frac{32\beta}{81}\right)
\big((I-AA^D)-Ae_n\big)^6
-\left(\frac{320}{729}-\frac{128\beta}{729}\right)
\big((I-AA^D)-Ae_n\big)^7
\\
&
-\left(\frac{40}{729}-\frac{16\beta}{729}\right)
\big((I-AA^D)-Ae_n\big)^8
+ I - AA^D.
\end{aligned}
\end{equation}
Using the identities (see \cite{SoleymaniF_2013})
\[
(I-AA^D)^m = I-AA^D,
\qquad
(I-AA^D)Ae_n = 0,
\qquad
\forall m \in \mathbb{N},
\]
we obtain
\[
\big((I-AA^D)-Ae_n\big)^p
=
(I-AA^D)+(-1)^p(Ae_n)^p,
\]
for $p=4,5,6,7,8$.

Substituting these expressions into the equation \eqref{eq:Ae(n+1)_drazin4} gives
\begin{align*}
Ae_{n+1}
&={}
\left(\frac{25}{27}-\frac{16\beta}{27}\right)(Ae_n)^4
+\frac{10}{27}(Ae_n)^5
-\left(\frac{86}{81}-\frac{32\beta}{81}\right)(Ae_n)^6
\\
&
+\left(\frac{320}{729}-\frac{128\beta}{729}\right)(Ae_n)^7
-\left(\frac{40}{729}-\frac{16\beta}{729}\right)(Ae_n)^8.
\end{align*}
Taking norms on both sides yields
\begin{align*}
\|Ae_{n+1}\|
\leq {} &
\|Ae_n\|^4
\Bigg(
\left|\frac{25}{27}-\frac{16\beta}{27}\right|
+\frac{10}{27}\|Ae_n\|
+\left|\frac{86}{81}-\frac{32\beta}{81}\right|
\|Ae_n\|^2
\\
&\quad
+\left|\frac{320}{729}-\frac{128\beta}{729}\right|
\|Ae_n\|^3
+\left|\frac{40}{729}-\frac{16\beta}{729}\right|
\|Ae_n\|^4
\Bigg).
\end{align*}
Since
\[
e_{n+1}=A^DAe_{n+1},
\]
we have
\[
\|e_{n+1}\|
\leq
\|A^D\|\,\|Ae_{n+1}\|.
\]
Moreover,
\[
\|Ae_n\| \leq \|A\|\,\|e_n\|.
\]
Therefore,
\begin{equation}\label{eq:error_Drazin4}
\begin{aligned}
\|e_{n+1}\|
\leq &
\|A^D\|\,\|A\|^4\,\|e_n\|^4
\Bigg(
\left|-\frac{25}{27}+\frac{16\beta}{27}\right|
+\frac{10}{27}\|A\|\|e_n\|
+\left|\frac{86}{81}-\frac{32\beta}{81}\right|
\|A\|^2\|e_n\|^2
\\
&\quad
+\left|\frac{320}{729}-\frac{128\beta}{729}\right|
\|A\|^3\|e_n\|^3
+\left|\frac{40}{729}-\frac{16\beta}{729}\right|
\|A\|^4\|e_n\|^4
\Bigg).
\end{aligned}
\end{equation}

Hence, $\|e_n\| \to 0
\;\;\text{as}\;\; n\to\infty$, which implies that $X_n \to A^D.$\\
Therefore, for $\beta \in \left[\frac{25}{16},\frac{5}{2}\right] $, the sequence $\{X_n\}$ converges to the Drazin inverse $A^D$ with fourth-order convergence.
For $\beta=\frac{25}{16},$ the equation \eqref{eq:error_Drazin4} becomes 
\begin{align*}
\|e_{n+1}\|
\leq {} &
\|A^D\| \, \|A\|^5 \, \|e_n\|^5
\Bigg(
\frac{10}{27}
+\frac{4}{9}\|A\|\|e_n\|
+\frac{120}{729}\|A\|^2\|e_n\|^2
+\frac{15}{729}\|A\|^3\|e_n\|^3
\Bigg),
\end{align*}
which represents the fifth-order of convergence.
\end{proof}
\section{Dynamical analysis of Heun-type matrix fourth-order scheme} \label{sec: dynamics}
Next, we study the dynamical behavior of the fourth-order Heun-type parametric family defined by the equation \eqref{eq:Heun4_matrix}.

Unlike the scalar iterative schemes, for the dynamical study of matrix schemes, we use a rational function
\begin{equation}\label{eq:rational_fun_4}
f(x)=\frac{1}{x}-1,
\end{equation}
which is appropriate for the qualitative analysis of matrix iterative schemes for approximating the  inverse of non-singular matrices.  Next, we apply this rational function to the parametric family of matrix iterative schemes to obtain the associated operator. 
\begin{proposition}
Consider the  rational function
\[
f(x)=\frac{1}{x}-1.
\]
The operator associated with the fourth-order Heun-type family of matrix iterative schemes defined in  \eqref{eq:Heun4_matrix} is given by
\begin{equation}\label{eq:operator_matix4}
R_\beta(x)=x+\frac{1}{729}x(x-1)(16\beta(4-5x+x^2)^3-(5-2x)^2(205-355x+321x^2-100x^3+10x^4)),
\end{equation}
which is defined for all $\beta\in\mathbb{R}$.
\end{proposition}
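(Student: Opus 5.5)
The operator is obtained by replacing the matrices in the scheme \eqref{eq:Heun4_matrix} with scalars, i.e.\ $A\mapsto 1$ and $X_n\mapsto x$. This corresponds to applying the scalar scheme \eqref{eq:Heun4_scaler_final} to $f(x)=1/x-1$, in the same way that \eqref{eq:scalar to matrix transf} with $a=1$ produced the matrix form. Since \eqref{eq:Heun4_matrix} contains only $X_n$ and $AX_n$, and all factors are polynomials in $AX_n$ (so they commute), the substitution is legitimate. It gives
\[
R_\beta(x)=x+\frac{1}{729}x\Big[16\beta(x-4)^3(x-1)^4-(5-2x)^2(x-1)\big(205+x(x-5)(71+10x(x-5))\big)\Big].
\]

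The remaining work is to bring this into the stated form, in three steps.

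\begin{enumerate}[label=(\roman*)]
\item Factor $(x-1)$ out of the bracket. The first term then becomes $16\beta(x-4)^3(x-1)^3=16\beta\big((x-4)(x-1)\big)^3=16\beta(4-5x+x^2)^3$.
\item Expand $205+x(x-5)(71+10x(x-5))$. Since $x(x-5)=x^2-5x$, we have $10(x^2-5x)^2+71(x^2-5x)+205=10x^4-100x^3+321x^2-355x+205$.
\item Take care of the sign. The second term carries $(x-1)$ with a minus sign, so after factoring out $(x-1)$ it appears exactly as $-(5-2x)^2(205-355x+321x^2-100x^3+10x^4)$.
\end{enumerate}

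Collecting these gives \eqref{eq:operator_matix4}. It is defined for every real $\beta$ because it is a polynomial in $x$ with $\beta$ entering linearly.

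As a cross-check, one can instead derive the operator directly from \eqref{eq:Heun4_scaler_final}. With $f(x)=1/x-1$ we have $f'(x)=-1/x^2$, $y=x+\tfrac{2}{3}x(1-x)$, and $\eta=y^2/x^2$. The weight $G(\eta)$ then contains the factor $1/(1+3\eta)$. The main obstacle, really the only delicate step, is checking that this denominator cancels against the Heun combination $\tfrac14 f(x)\big(1/f'(x)+3/f'(y)\big)=-\tfrac14(1-x)x(x^2+3y^2)/x^2$. Here $x^2+3y^2=x^2(1+3\eta)$, so the rational factor disappears and the result is a polynomial operator. I would verify this cancellation explicitly and then check that the resulting degree-9 polynomial agrees with $R_\beta$, for instance by comparing values at a few points such as $x=0,1,2,4$.
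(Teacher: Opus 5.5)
Your proposal is correct and matches the paper's argument, which simply substitutes $f(x)=1/x-1$ (equivalently $A\mapsto 1$, $X_n\mapsto x$) and simplifies; your steps (i)--(iii) check out. One small slip in the cross-check: $R_\beta$ has degree at most $8$ (leading coefficient $(16\beta-40)/729$), not $9$, so a pointwise comparison would need nine points to be conclusive, not four; the scalar route does in fact reproduce \eqref{eq:operator_matix4} exactly, via $\eta=(5-2x)^2/9$ and $1-\eta=\tfrac49(x-1)(4-x)$.
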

\begin{proof}
 Substituting the rational function \eqref{eq:rational_fun_4} 
   into the scalar form associated with the parametric Heun-type fourth-order matrix iterative schemes and simplifying yields the expression \eqref{eq:operator_matix4}.
\end{proof}
Since a simpler form of the Heun-type family of matrix iterative schemes has been established in terms of the operator $R_\beta(x)$. Moreover, it should be noted that, just as in the case of the parametric Heun-type matrix iterative schemes, the operator $R_\beta(x)$ also depends on the parameter.
Next, we use this operator to determine its fixed points, their stability, and its critical points. In matrix iterative schemes, the dynamical analysis is generally carried out on the real axis. However, we restrict our study to real-valued dynamics and do not consider complex-valued results.  
\subsection{Fixed points of \texorpdfstring{$R_\beta(x)$}{Rbeta(x)}}
\begin{theorem}
Let $R_\beta:D\subset\mathbb{R}\to\mathbb{R}$ be the operator associated with the family of Heun-type matrix schemes applied to \eqref{eq:rational_fun_4}.
Then, $x=1$ is a fixed point of $R_\beta(x)$ for all $\beta\in\mathbb{R}$. Moreover, $x=0$ is a strange fixed point for all $\beta\in\mathbb{R}$.  Additionally:
\begin{enumerate}
    \item If $ \beta\in(-\infty,0]\cup\left(\frac52,\infty\right),$
    then the operator $R_\beta(x)$ has  two real strange fixed points, denoted by $ex_i(\beta)$, $i=1,2$, which are the roots of the polynomial
\begin{equation}
 \begin{aligned}
q(x)=&-5125+1024\beta+(12975-3840\beta)x+(-15945+5568\beta)x^2\\&+(10340-3920\beta)x^3+(-3534+1392\beta)x^4+(600-240\beta)x^5\\&+(-40+16\beta)x^6.
\end{aligned}
\end{equation}
    In particular:
\begin{itemize}
  \item For $\beta=0$,  
$ex_1(\beta)=ex_2(\beta)=\frac{5}{2}$.
\item For $\beta=\frac{5125}{1024}$,  $ex_1(\beta)=0,\;\text{and} \; ex_2(\beta)=5.$
    \end{itemize}

    \item If $\beta\in\left(0,\frac52\right],$
    then no strange fixed points exist, except $x=0$.
\end{enumerate}
\end{theorem}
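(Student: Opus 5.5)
Fixed points of $R_\beta$ are solutions of $R_\beta(x)=x$, and by \eqref{eq:operator_matix4} this means
\[
x(x-1)\Big(16\beta(4-5x+x^2)^3-(5-2x)^2(205-355x+321x^2-100x^3+10x^4)\Big)=0 .
\]
So $x=1$ (the root of \eqref{eq:rational_fun_4}) and $x=0$ are fixed for every $\beta$. The point $x=0$ is strange, since it is not a root of $f$. The remaining fixed points are the real zeros of the bracket. I will expand the bracket, using $(4-5x+x^2)^3$ and $(25-20x+4x^2)$ times the quartic, and check that it equals $q(x)$ as stated. The sign convention does not matter for locating roots. As a check, the constant terms give $64\cdot16\beta-25\cdot205=1024\beta-5125$, and the leading terms give $16\beta-40$, which agrees. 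From the constant term, $q(0)=0$ exactly when $\beta=5125/1024$. Dividing $q$ by $x$ then leaves a quintic, and I will check that $x=5$ is one of its roots.

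Next I will reduce $q$ so its real roots are visible, rather than solving a sextic directly. Since $4-5x+x^2=(x-1)(x-4)$, set $u=x-\tfrac52$. Then $4-5x+x^2=u^2-\tfrac94$ and $(5-2x)^2=4u^2$. The quartic $205-355x+\dots+10x^4$ should be even in $u$ (I will verify this by substitution), say $10u^4+au^2+b$. Then $q$ becomes a cubic in $s=u^2\ge0$:
\[
q=16\beta\,(s-\tfrac94)^3-4s\,(10s^2+as+b).
\]
Real fixed points come in symmetric pairs $x=\tfrac52\pm\sqrt{s}$, one pair for each nonnegative root $s$ of this cubic, and $s=0$ gives the double point $x=\tfrac52$. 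For $\beta=0$ the cubic is $-4s(10s^2+as+b)$. Provided $10s^2+as+b$ has no nonnegative roots (I expect $a,b>0$), the only root is $s=0$, which gives $ex_1=ex_2=\tfrac52$.

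For general $\beta$ I will study $h(s)=q/(\dots)$ as a cubic on $[0,\infty)$. At $s=0$, $h(0)=-16\beta\cdot\tfrac{729}{64}$, which is negative for $\beta>0$ and positive for $\beta<0$. The leading coefficient is $16\beta-40$, which is negative for $\beta<\tfrac52$ and positive for $\beta>\tfrac52$. When $\beta<0$ or $\beta>\tfrac52$ the endpoint signs differ, so there is at least one positive root, giving at least one symmetric pair. When $0<\beta<\tfrac52$ both ends are negative, and I need to show $h<0$ on all of $[0,\infty)$. I will write $h$ as a convex combination in $\beta$: $h=\beta\,h_1(s)+h_0(s)$ with $h_1=16(s-\tfrac94)^3$ and $h_0=-4s(10s^2+as+b)$. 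Because $h$ is affine in $\beta$, it suffices to check $h<0$ at $\beta\to0^+$ and at $\beta=\tfrac52$. At $\beta=\tfrac52$ the cubic term cancels and $h$ is a quadratic whose discriminant I expect to be negative with a negative leading coefficient, so $h<0$. This argument also covers $\beta=\tfrac52$.

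The main obstacle is the "exactly two" count in case 1, that is, showing the cubic has exactly one positive root when $\beta<0$ or $\beta>\tfrac52$. I will use Descartes' rule of signs on $h(s)$, tracking how the signs of the coefficients depend on $\beta$. If that turns out to be inconclusive, I will fall back on the discriminant of the cubic as a function of $\beta$. In the special case $\beta=5125/1024$, $s=\tfrac{25}{4}$ is a root of the cubic, giving $x\in\{0,5\}$; then $x=0$ is a double coincidence with the existing strange fixed point, which matches the statement.
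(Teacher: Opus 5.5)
Your argument is correct, and it takes a genuinely different route from the paper. The paper only factors $R_\beta(x)-x=\frac{1}{729}\,x(x-1)\,q(x)$ and then asserts, without derivation, that the listed $\beta$-intervals are where $q$ has real roots. You actually prove those intervals. Since $q$ is invariant under $x\mapsto 5-x$, the sextic reduces to a cubic in $s=(x-\frac52)^2$, and real fixed points correspond to roots $s\ge 0$. The case $0<\beta\le\frac52$ then follows from affinity in $\beta$ together with $h|_{\beta=5/2}=-54s^2-\frac{3645}{8}<0$. Your approach also explains the special cases the paper merely states: the double point $\frac52$ at $\beta=0$, and the symmetric pair $\{0,5\}$ at $\beta=\frac{5125}{1024}$. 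Your route buys rigor and structure; the paper's buys only brevity.

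Two things need fixing when you write it out.

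First, the quartic is $10u^4-54u^2+\frac{1215}{8}$, so $a=-54<0$, contrary to your expectation. Your $\beta=0$ step still holds because $54^2-40\cdot\frac{1215}{8}=-3159<0$, so $10s^2-54s+\frac{1215}{8}$ has no real roots.

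Second, and more importantly, Descartes' rule is inconclusive in both regimes. The cubic is
\[
h(s)=(16\beta-40)s^3+(216-108\beta)s^2+\Bigl(243\beta-\tfrac{1215}{2}\Bigr)s-\tfrac{729}{4}\beta .
\]
Its sign pattern is $-,+,-,+$ for $\beta<0$ and $+,-,+,-$ for $\beta>\frac52$, so it always allows three or one positive roots. Your discriminant fallback does work. With $s=\frac94 t$ one gets $h=\frac{729}{8}\,k(t)$, where
\[
k(t)=(2\beta-5)t^3-6(\beta-2)t^2+3(2\beta-5)t-2\beta .
\]
The discriminant of $k$ equals $-108\bigl[(8\beta-18)^2+1\bigr]$, which is negative for every real $\beta$. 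Hence for $\beta\neq\frac52$ the cubic has exactly one real root, and it is simple. Your endpoint-sign argument then places it in $(0,\infty)$ exactly when $\beta<0$ or $\beta>\frac52$. This gives exactly two simple real roots of $q$, or the double root $t=0$ when $\beta=0$.

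Finally, add the one-line check $q(1)=-729\neq 0$. This shows these roots are never the root $x=1$, so they are genuinely strange fixed points.
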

\begin{proof}
To determine the fixed points, we solve
\[
R_\beta(x)=x.
\]
After simplification, we have
\begin{equation}\label{eq:fixed_point_exp_matrix4}
\frac{1}{729}x(x-1)\left(16\beta(4-5x+x^2)^3-(5-2x)^2(205-355x+321x^2-100x^3+10x^4)\right)=0.
\end{equation}
The above expression has two real roots 
\[
x=0,\qquad x=1,
\]
and the real roots of the polynomial $q(x)$.

Since $x=1$ is the root of the rational function $f(x)$, so it is considered a fixed point, meanwhile the remaining real roots of \eqref{eq:fixed_point_exp_matrix4} are classified as the strange fixed points. The intervals for $\beta$ stated in the theorem indicates the parameter values for which the polynomial $q(x)$ has real roots.
\end{proof}

The root of the function, $x=1$, is a superattracting fixed point for every $\beta\in\mathbb{R}$. Next, we determine the stability of the strange fixed points.
\subsubsection{Stability of the fixed points}
To determine the stability of the strange fixed points, we establish their attracting, repelling, parabolic, and superattracting behaviors.
\begin{theorem}
The strange fixed point $x=0$ has attracting behavior for  $\beta\in\left(\frac{5125}{1024},\frac{6585}{1024}\right)$, superattracting behavior at $\beta=\frac{5854}{1024},$ parabolic behavior at $\beta=\frac{5125}{1024}
        \;\; \text{and} \;\;
        \beta=\frac{6585}{1024},$ and repelling behavior for all other real values of $\beta$.
\end{theorem}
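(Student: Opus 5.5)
The plan is to compute the multiplier $R_\beta'(0)$ in closed form as an affine function of $\beta$, and then to solve the inequalities in the stability definition directly. No result beyond the explicit form \eqref{eq:operator_matix4} of the operator is needed.

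First I write $R_\beta(x)=x+\frac{1}{729}\,x(x-1)\,g_\beta(x)$, with
\[
g_\beta(x)=16\beta(4-5x+x^2)^3-(5-2x)^2(205-355x+321x^2-100x^3+10x^4).
\]
Differentiating the product $x(x-1)g_\beta(x)$ and evaluating at $x=0$ kills every term containing the factor $x$. This leaves
\[
R_\beta'(0)=1+\frac{1}{729}(0-1)\,g_\beta(0)=1-\frac{g_\beta(0)}{729}.
\]
Since $g_\beta(0)=16\beta\cdot 4^3-5^2\cdot 205=1024\beta-5125$, we get
\[
R_\beta'(0)=\frac{5854-1024\beta}{729}.
\]
This also confirms that $x=0$ is a fixed point for every $\beta$, because $R_\beta(0)=0$.

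Next I classify by the size of $|R_\beta'(0)|$:
\begin{itemize}
\item \emph{Superattracting:} $R_\beta'(0)=0$ exactly when $\beta=\frac{5854}{1024}$.
\item \emph{Attracting:} $|R_\beta'(0)|<1$ means $-729<5854-1024\beta<729$, that is, $5125<1024\beta<5854+729$.
\item \emph{Parabolic:} equality holds at the two endpoints of that interval. The lower endpoint $\beta=\frac{5125}{1024}$ is exactly where $g_\beta(0)=0$, which is consistent with the earlier fixed-point theorem, where $ex_1(\beta)=0$ merges with $x=0$.
\item \emph{Repelling:} for all remaining real $\beta$, $|R_\beta'(0)|>1$.
\end{itemize}
Since $R_\beta'(0)$ is real and strictly decreasing in $\beta$, these cases partition $\mathbb{R}$ cleanly.

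The only real obstacle is careful arithmetic at the upper endpoint. My computation gives $5854+729=6583$, so the upper parabolic value comes out as $\beta=\frac{6583}{1024}$ rather than the stated $\frac{6585}{1024}$. I would recheck $g_\beta(0)$, in particular $25\cdot 205=5125$ and $16\cdot 64=1024$, and the normalising factor $729$ in \eqref{eq:operator_matix4}. If the expression for $R_\beta$ is correct, the attracting interval should be reported as $\bigl(\frac{5125}{1024},\frac{6583}{1024}\bigr)$, and the statement's $\frac{6585}{1024}$ looks like a typographical slip.
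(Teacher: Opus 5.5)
Your proof is correct and follows the paper's route: the paper also evaluates $R'_\beta(0)=\frac{2}{729}(2927-512\beta)=\frac{5854-1024\beta}{729}$ and then solves $|R'_\beta(0)|<1$, $=0$, $=1$. The paper gets this value from its factored formula for $R'_\beta$, while you differentiate the product form directly at $x=0$.

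Your arithmetic check is also right. Since $5854+729=6583$, the upper endpoint should be $\frac{6583}{1024}$, and the paper's $\frac{6585}{1024}$, repeated in its proof, is a slip. At $\beta=\frac{6585}{1024}$ one gets $R'_\beta(0)=-\frac{731}{729}$, so $x=0$ is repelling there, not parabolic, and the statement as printed is false on $\bigl[\frac{6583}{1024},\frac{6585}{1024}\bigr]$.
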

\begin{proof}
To determine the stability of the strange fixed point $x=0$, we compute the derivative of the associated rational operator $R_\beta(x)$, which is
\begin{equation}
R'_\beta(x)=\frac{2}{729}(x-1)^3(-2927+9319x-6642x^2+1760x^3-160x^4+32\beta(x-4)^2(1-6x+2x^2)).
\end{equation}
\noindent
Evaluating $R'_\beta(x)$ at $x=0$, we obtain
\begin{equation*}
R'_\beta(0)=\frac{2}{729}(2927-512\beta).
\end{equation*}
The condition for a point to be attracting is
\[
|R'_\beta(0)|<1.
\]
After simplifying, we obtain
\[
\frac{5125}{1024}<\beta<\frac{6585}{1024},
\]
which is the attracting region for the strange fixed point $x=0$ .

Next,
\[
|R'_\beta(0)|=0,
\]
gives
\[
\beta=\frac{5854}{1024},
\]
which determines the superattracting parameter value for $x=0$.
Similarly,
\[
|R'_\beta(0)|=1,
\]
yields
\[
\beta=\frac{5125}{1024},
\qquad
\beta=\frac{6585}{1024},
\]
correspond to the parameter value for which $x=0$ is parabolic.

Hence, for all remaining values of $\beta$, the strange fixed point $x=0$ is repelling.
\begin{figure}[H]
    \centering
\includegraphics[width=0.5\textwidth]{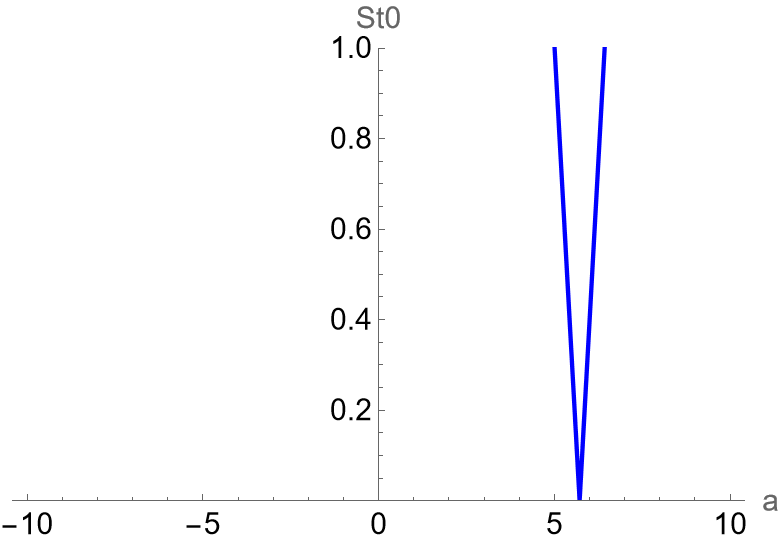}
    \caption{Stability region of the strange fixed point $x=0$.}
    \label{fig:PF_Heun4_Matrix_x0}
\end{figure}

Figure~\ref{fig:PF_Heun4_Matrix_x0} represents the stability region of $x=0$, which also validates the previous theoretical results. 
\end{proof}
\begin{theorem}
The stability of the strange fixed point $ex_1(\beta)$ and $ex_2(\beta)$ is given as follows:
\begin{enumerate}
    \item For
    $\beta\in(-\infty,0],$
    \begin{itemize}
        \item the strange fixed point $ex_1(\beta)$ is always repulsive,
        \item the strange fixed point $ex_2(\beta)$ is superattracting at
 $\beta\approx -0.06403$, attracting for $\beta\in(-0.207,-1.02\times10^{-7})$, parabolic at
            $\beta=-0.207
            \; \text{and} \;
            \beta=-1.02\times10^{-7}$, and repelling elsewhere.
    \end{itemize}

    \item For
   $ \beta\in\left[\frac52,\infty\right),$
    \begin{itemize}
        \item the strange fixed point $ex_1(\beta)$ is 
         superattracting at
            $\beta\approx4.4727$,
           attracting for
           $ \beta\in(4.154,5.005)$, parabolic at
           $ \beta=4.154
            \; \text{and} \;
            \beta=5.005$, and  repulsive elsewhere,
        \item $ex_2(\beta)$ is always repulsive.
    \end{itemize}
    \end{enumerate}
\end{theorem}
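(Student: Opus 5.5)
The proof will follow the same scheme as the stability theorem for $x=0$. We evaluate the derivative
\[
R'_\beta(x)=\frac{2}{729}(x-1)^3\bigl(-2927+9319x-6642x^2+1760x^3-160x^4+32\beta(x-4)^2(1-6x+2x^2)\bigr)
\]
along the two branches $ex_1(\beta)$, $ex_2(\beta)$ of real roots of $q(x)$, which exist on the parameter ranges given in the previous theorem. Since $q$ is linear in $\beta$, we will parametrize the branches by $x$ rather than by $\beta$. Writing $q(x)=q_0(x)+\beta q_1(x)$, we have
\[
q_1(x)=1024-3840x+5568x^2-3920x^3+1392x^4-240x^5+16x^6=16(x-4)^3(x-1)^3,
\]
which matches the structure of $R_\beta$. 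Solving $q(x)=0$ then gives the explicit rational function $\beta(x)=-q_0(x)/q_1(x)$. Substituting this into $R'_\beta(x)$ yields a one-variable rational function
\[
S(x):=R'_{\beta(x)}(x),
\]
defined on the set of real $x$ that are strange fixed points, that is, $x\neq 0,1,4$.

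The key steps are as follows.

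First, we determine the ranges of $x$ swept by each branch. This amounts to studying the monotonicity of $\beta(x)$: its critical points give the fold values of $\beta$, where the two roots merge, for instance $\beta=0$ with $x=5/2$. The paper's labels $ex_1$ and $ex_2$ then correspond to specific $x$-intervals; for example, near $\beta=5125/1024$ the branches pass through $x=0$ and $x=5$.

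Second, on each $x$-interval we solve $S(x)=0$, $S(x)=1$ and $S(x)=-1$. These are polynomial equations once denominators are cleared. Their roots, mapped back through $\beta(x)$, give the superattracting value ($\beta\approx-0.06403$ on one branch and $\beta\approx4.4727$ on the other) and the parabolic endpoints ($-0.207$, $-1.02\times10^{-7}$, $4.154$, $5.005$).

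Third, the sign of $|S(x)|-1$ is constant between consecutive roots, so evaluating it at a single test point per subinterval classifies each stretch as attracting or repelling. In particular, this shows that one branch never enters $|S|<1$ on each $\beta$-range, which gives the "always repulsive" claims.

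As a sanity check, we will note that near $\beta=5125/1024$ the branch $ex_1$ passes through $x=0$. There $R'_\beta(0)=1$, which is consistent with the parabolic endpoint $\beta\approx5.005$ of $ex_1$ coinciding with the transcritical exchange with $x=0$.

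The main obstacle is that the roots of $S(x)=\pm1$ and $S(x)=0$ are zeros of polynomials of fairly high degree, so they are only obtainable numerically. The stated values are therefore approximations, and the classification has to be certified by a sign analysis rather than by closed forms. The tiny endpoint $-1.02\times10^{-7}$ also suggests a near-degeneracy close to the fold at $\beta=0$, $x=5/2$. To handle it, we will first expand $S$ around $x=5/2$ to confirm that $S(5/2)=1$ at the fold, since a double fixed point is parabolic. We will then check that $ex_2$ crosses $|S|=1$ at a distance $O(10^{-7})$ in $\beta$. We will complement the analytic sign argument with stability-function plots analogous to Figure~\ref{fig:PF_Heun4_Matrix_x0} for both branches.
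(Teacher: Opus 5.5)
Your route is genuinely different from the paper's. The paper computes nothing: it declares the analytic problem too hard and reads the regions off plots of $|R'_\beta(ex_i(\beta))|<1$. You eliminate $\beta$ via $\beta(x)=-q_0(x)/q_1(x)$ (your factorization $q_1=16(x-1)^3(x-4)^3$ is correct) and classify $S(x)=R'_{\beta(x)}(x)$ by sign changes, which makes the classification checkable. Your transcritical observation also pins the right endpoint for $ex_1$ exactly at $\beta=5125/1024$.

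The bookkeeping simplifies further. Since $q(x)=16\beta(x-1)^3(x-4)^3-(5-2x)^2h(x)$ with $h(\tfrac52+t)=10t^4-54t^2+\tfrac{1215}{8}$, $q$ is invariant under $x\mapsto 5-x$. Hence $\beta$ depends only on $u=(x-\tfrac52)^2$, and it is strictly decreasing on $[0,\tfrac94)$ and on $(\tfrac94,\infty)$. It tends to $\tfrac52$ as $u\to\infty$, so no strange fixed point other than $0$ exists at $\beta=\tfrac52$. The branches are $\tfrac52\mp\sqrt u$: the paper's $ex_2$ is the $+$ branch for $\beta\le0$, and $ex_1$ is the $-$ branch for $\beta>\tfrac52$.

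The step that cannot succeed is your planned check that $ex_2$ meets $|S|=1$ at distance $O(10^{-7})$ from $\beta=0$. No such crossing exists, and the statement is inaccurate there. With $t=x-\tfrac52$ one has, for all $x$,
\[
q'(x)=t\Bigl[96\beta\bigl(t^2-\tfrac94\bigr)^2-\bigl(240t^4-864t^2+1215\bigr)\Bigr].
\]
The quadratic $240s^2-864s+1215$ has negative discriminant, so the bracket is negative whenever $\beta\le 0$. For $\beta<0$ the real roots of $q$ lie in $(1,4)$, where $x(x-1)>0$. So at a strange fixed point, $R'_\beta(x)-1=\tfrac{1}{729}x(x-1)q'(x)$ has the sign of $-t$. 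This gives two conclusions for every $\beta<0$:
\begin{itemize}
\item $ex_1$ has multiplier $>1$, which rigorously proves that bullet;
\item $ex_2$ has multiplier $<1$.
\end{itemize}
Near the fold, $R'_\beta(ex_2)=1-\tfrac{25}{4}t+O(t^2)$ with $\beta=-\tfrac{10}{3}t^2+O(t^4)$. At $\beta=-1.02\times10^{-7}$ the multiplier is therefore about $0.9989$, not $1$.

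Consequently $ex_2$ is attracting on all of $(\beta_*,0)$, where $\beta_*\approx-0.207$ is the flip value $R'=-1$. At $\beta=0$ both branches coincide at $x=\tfrac52$ with multiplier exactly $1$, as your own check $S(\tfrac52)=1$ already shows. So $-1.02\times10^{-7}$ is a resolution artifact of the paper's plots, and $ex_1$ is parabolic, not repulsive, at $\beta=0$.

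Carried out honestly, your argument proves a corrected statement: right endpoint $0$, and both points parabolic at $\beta=0$. You should present it that way rather than aim to certify a crossing that does not exist.
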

\begin{proof}
It is difficult to calculate the stability region of  $ex_1(\beta)$ and $ex_2(\beta)$ analytically. Therefore, to justify the regions stated in the  theorem, we  analyze and represent them graphically.

\textbf{Stability of the fixed point \texorpdfstring{$ex_1(\beta)$}}.
\begin{figure}[H]
    \centering
    \includegraphics[width=0.5\textwidth]{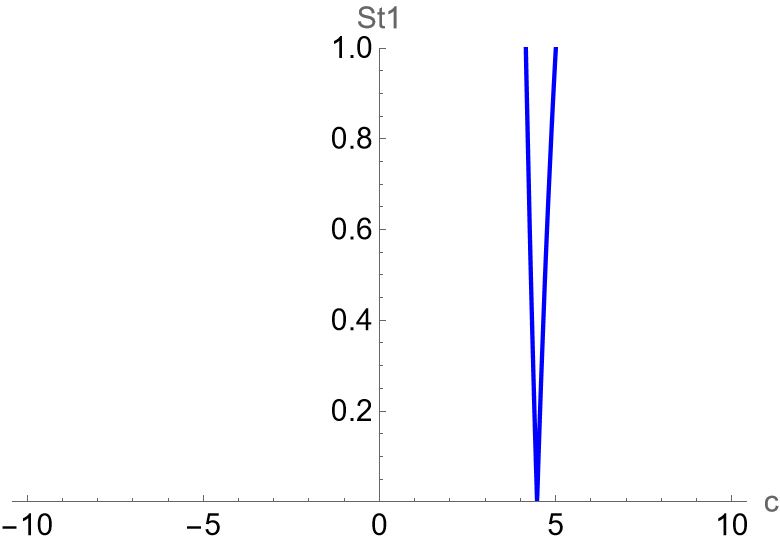}
    \caption{$|R'_\beta(ex_1(\beta))|<1$ for $\beta\in(-\infty,0] \cup[\frac52,\infty)$.}
    \label{fig:PF_Heun4_Matrix_ex1R}
\end{figure}

For $\beta\in(-\infty,0]$, Figure~\ref{fig:PF_Heun4_Matrix_ex1R} contains no points, which means that the fixed point $ex_1(\beta)$ is repulsive there. meanwhile, for $\beta\in[\frac52,\infty)$, the parameter region where $ex_1(\beta)$ becomes attracting is located approximately in the interval $\beta \in [4, 5]$.

\textbf{Stability of the fixed point \texorpdfstring{$ex_2(\beta)$}}.

\begin{figure}[H]
    \centering
    \includegraphics[width=0.5\textwidth]{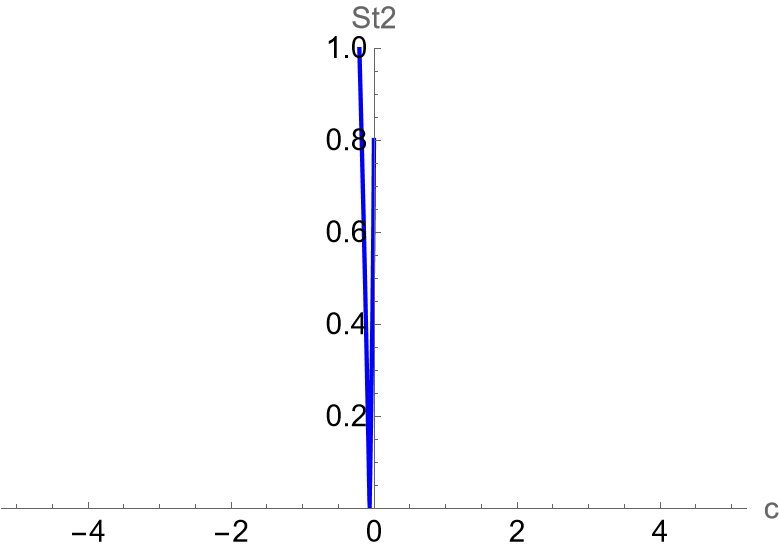}
    \caption{$|R'_\beta(ex_2(\beta))|<1$ for $\beta\in(-\infty,0] \cup[\frac52,\infty)$.}
    \label{fig:PF_Heun4_Matrix_ex2A}
\end{figure}
Figure~\ref{fig:PF_Heun4_Matrix_ex2A} represents the parameter region where $ex_2(\beta)$ becomes attracting,   which is approximately located in the interval $[-0.2, 0]$. Moreover, $ex_2(\beta)$ remains repulsive in the interval $\beta\in[\frac52,\infty)$.

All the results for the stability of the  strange fixed points obtained in the above figures validate the theorem statement. 
\end{proof}

After analyzing the stability of the strange fixed points, we now study the free critical points of the operator, which play an important role in the analysis of parameter lines, dynamical lines, and basins of attraction.
\subsection{Critical points of \texorpdfstring{$R_\beta(x)$}{Rβ(x)}}
\begin{theorem}
The point $x=1$ is a critical point of $R_\beta(x)$ for every $\beta\in\mathbb{R}$. Additionally:
\begin{enumerate}
    \item If $\beta\in(-\infty,1.78]\cup\left(\frac52,4.83\right),$ 
    then the operator has two free critical points $cr_i(\beta)$, $i=1,2$, corresponding to the real roots of
    \begin{equation}
    \begin{aligned}
    p(x)=&(-160+64\beta)x^4+(1760-704\beta)x^3+(-6642+2592\beta)x^2\\&+(9319-3328\beta)x-2927+512\beta.
    \end{aligned}
    \end{equation}
    In particular, at $\beta =\frac{25}{16}$, the free critical points $cr_1(\beta)=1$ and $cr_2(\beta)\approx2.25$. Moreover, at $\beta\approx1.78$, the free critical points $cr_1(\beta)$ and $cr_2(\beta)$ coincide.

    \item If $ \beta\in[4.83,\infty)$,
    then the operator has four free critical points $cr_i(\beta)$ and  $i=1,2,3,4$, given by the real roots of the same polynomial $p(x)$. Moreover, at $\beta\approx4.83$, the free critical points $cr_2(\beta)$ and $cr_3(\beta)$ coincide .
\end{enumerate}
\end{theorem}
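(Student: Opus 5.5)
The plan starts from the derivative already computed in the proof of the stability theorem for $x=0$:
\[
R'_\beta(x)=\frac{2}{729}(x-1)^3\,\big(-2927+9319x-6642x^2+1760x^3-160x^4+32\beta(x-4)^2(1-6x+2x^2)\big).
\]
The factor $(x-1)^3$ shows at once that $x=1$ is a critical point for every $\beta\in\mathbb{R}$; in fact it is a critical point of multiplicity three, which matches the superattracting character of the root noted after the fixed-point theorem. The remaining critical points are the real zeros of the quartic bracket. I would expand $32\beta(x-4)^2(1-6x+2x^2)$ and collect powers of $x$. This should reproduce exactly the polynomial $p(x)$ of the statement, with leading coefficient $64\beta-160$ and constant term $512\beta-2927$. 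These zeros are free critical points by definition, since they are not roots of $f$, except where they happen to equal $1$.

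For the special values I would argue as follows. At $\beta=\tfrac{25}{16}$, substitute into $p$ and check that $p(1)=0$. This is consistent with the fifth-order case, where $R'$ acquires an extra factor $(x-1)$. Dividing by $(x-1)$ leaves a cubic, and I would show this cubic has exactly one real root, approximately $2.25$, by a discriminant check or by locating the sign change numerically. One caveat: at $\beta=\tfrac{5}{2}$ the leading coefficient $64\beta-160$ vanishes and $p$ drops to a cubic. This is why $\beta=\tfrac52$ is excluded from the first case, and I would note it explicitly.

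The main work is counting the real roots of $p$ as $\beta$ varies. I would treat $p$ as a one-parameter family and compute the discriminant $\Delta(\beta)=\operatorname{disc}_x p(x;\beta)$. The number of real roots can only change at real zeros of $\Delta$, where two roots coalesce, or at $\beta=\tfrac52$, where the degree drops. So I would locate the real zeros of $\Delta$ numerically, which should give $\beta\approx1.78$ and $\beta\approx4.83$, possibly together with spurious zeros corresponding to complex-conjugate coalescences. Then I would evaluate the number of real roots of $p$ at one sample $\beta$ in each resulting interval, using Sturm sequences or direct numerical root-finding; for example $\beta=0$, $\beta=2$, $\beta=3$ and $\beta=6$. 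This yields two real roots on $(-\infty,1.78]\cup(\tfrac52,4.83)$, zero real roots on $(1.78,\tfrac52]$ (implicitly, since that range is absent from the statement), and four on $[4.83,\infty)$.

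At the two boundary values, a double root of $p$ confirms the stated coincidences: $cr_1=cr_2$ at $1.78$ and $cr_2=cr_3$ at $4.83$. I expect the discriminant step to be the obstacle. $\Delta(\beta)$ is a high-degree polynomial in $\beta$ with large integer coefficients, so the thresholds can only be certified numerically, to the two-decimal accuracy given in the statement. In keeping with the stability proofs above, I would support the count with a plot of the real roots of $p$ against $\beta$.
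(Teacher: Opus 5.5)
Your first step is the paper's entire proof. The paper factors $R'_\beta(x)=\frac{2}{729}(x-1)^3p(x)$, reads off the triple critical point $x=1$, and asserts that the real roots of $p$ are the free critical points. It never justifies the root counts on the $\beta$-intervals, the thresholds $1.78$ and $4.83$, or the values at $\beta=\tfrac{25}{16}$. Your discriminant-plus-sampling plan supplies what the paper leaves out, and the numbers check out. $p(1)=1350-864\beta$ vanishes at $\beta=\tfrac{25}{16}$, the remaining cubic has a single real root $x\approx 2.2455$, and $p$ has $2,0,2,4$ real roots at $\beta=0,2,3,6$.

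One correction is needed. At $\beta=\tfrac52$ both $64\beta-160$ and $1760-704\beta$ vanish, so $p$ drops to the quadratic $-162x^2+999x-1647$, not a cubic. Its discriminant is $-69255<0$, and two roots, not one, escape to infinity there. As you wrote it, a real cubic at $\beta=\tfrac52$ would necessarily have a real root, contradicting your own count of zero real roots on $(1.78,\tfrac52]$.

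You can also avoid the discriminant, which has degree at most six in $\beta$, because $p$ is affine in $\beta$. Its $\beta$-coefficient $32(x-4)^2(2x^2-6x+1)$ never vanishes at a root of $p$, since there $p=-27(6E^2+25E+30)\neq 0$ with $E=1-x$. So the real roots for a given $\beta$ are exactly the real solutions of $\beta=\phi(x)$, where
\[
\phi(x)=\frac{160x^4-1760x^3+6642x^2-9319x+2927}{32(x-4)^2(2x^2-6x+1)}=\frac52+\frac{27(6E^2+25E+30)}{32(E+3)^2(2E^2+2E-3)} .
\]
The equation $\phi'=0$ reduces to $24E^4+162E^3+454E^2+573E+225=0$, which has exactly two real roots. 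They give:
\begin{itemize}
\item a local maximum $\phi\approx1.7802$ at $x\approx1.677$, on the branch between the poles $\tfrac{3\pm\sqrt7}{2}$;
\item a local minimum $\phi\approx4.834$ at $x\approx3.21$, on the branch $(\tfrac{3+\sqrt7}{2},4)$.
\end{itemize}
On the two outer branches $\phi$ is monotone and tends to $\tfrac52^{+}$ as $x\to\pm\infty$. Counting preimages then gives every count in the statement. It identifies $1.78$ and $4.83$ as the two critical values of $\phi$, and it shows why a pair of real roots enters from infinity as $\beta$ crosses $\tfrac52$.
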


\begin{proof}
To obtain the free critical points, we solve
\[
R'_\beta(x)=0.
\]
Since
\begin{equation}
\begin{aligned}
\frac{2}{729}(x-1)^3&(-2927+9319x-6642x^2+1760x^3-160x^4\\&+32\beta(x-4)^2(1-6x+2x^2))=0,
\end{aligned}
\end{equation}
we obtain the triple critical point $x=1$ and the real roots of the polynomial $p(x).$ All the real roots of the above expression are considered the free critical points,  except $x=1$, which corresponds to the root of the rational function.
\end{proof}
Next, we use these free critical points to determine the stable parameter values of $\beta$ for the iterative schemes by constructing  parameter lines.
\subsection{Parameter lines}
To generate the parameter lines, we take the free critical points as initial approximations and analyze their dynamical behavior by determining whether they converge to the root $x=1$ or to a strange attractor. 

To this end, a discretization of $500$ points is defined on the real line. Each point corresponds to a particular value of the parameter $\beta$, and for each value of $\beta$, the operator $R_\beta(x)$ is iterated. If the orbit of the critical point converges to the root $x=1$ within approximately $500$ iterations, then the corresponding value of $\beta$ is colored red. In contrast, if the free critical point converges to a strange attractor or to a periodic orbit, the point is colored black. The blue region represents the divergence of the free critical point.
\begin{figure}[H]
    \centering
\includegraphics[width=0.7\textwidth]{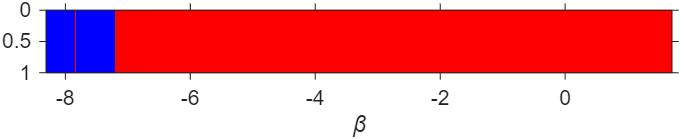}
     \caption{$cr_1(\beta)$ for $\beta \in (-\infty, 1.78]$}
     \label{fig:Linea_param_cr1_menor_1}
\end{figure}

In Figure~\ref{fig:Linea_param_cr1_menor_1}, the behavior of the free critical point $cr_1(\beta)$ defined for $\beta \in (-\infty, 1.78]$ is represented, which shows that for the parameter values  $\beta \in (-7.21\;, \; 1.78]$, the free critical point converges to the root $x=1$.The blue regions correspond to the divergence.

\begin{figure}[H]
    \centering
\includegraphics[width=0.7\textwidth]{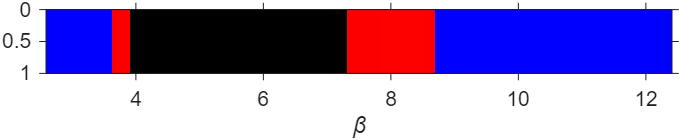}
        \caption{$cr_1(\beta)$ for $\beta \in (\frac{5}{2}, \infty)$}
    \label{fig:Linea_param_cr1_mayor_3}
\end{figure}

Figure~\ref{fig:Linea_param_cr1_mayor_3} shows the behavior of the free critical point $cr_1(\beta)$ when $\beta \in (\frac52,\infty)$. It can be observed that, for $\beta \in (3.64,  3.91) \cup (7.3 , 8.7)$, the free critical point $cr_1(\beta)$ always approaches to the root $x=1$, so it is colored red. In the middle of these regions, the free critical point converges to the strange attractor or to a periodic orbit. Beyond this region, the free critical point diverges.

\begin{figure}[H]
    \centering
\includegraphics[width=0.7\textwidth]{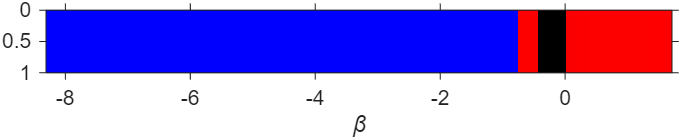}
     \caption{$cr_2(\beta)$ for $\beta \in (-\infty, 1.78]$}
    \label{fig:Linea_param_cr2_menor_1}
\end{figure}

In Figure~\ref{fig:Linea_param_cr2_menor_1}, it can be observed that for approximately $\beta \in [-0.75,-0.4]\cup[0,1.78]$, the free critical point $cr_2(\beta)$ approaches the root $x=1$. For approximately $\beta \in [-0.4,0)$, the free critical point converges to a strange attractor or to an attracting periodic orbit, while for all other values of $\beta$, the free critical point diverges. 

\begin{figure}[H]
    \centering
\includegraphics[width=0.7\textwidth]{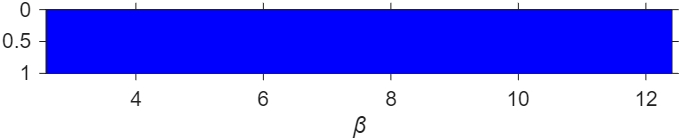}
        \caption{$cr_2(\beta)$ for $\beta \in (\frac{5}{2}, \infty)$}
    \label{fig:Linea_param_cr2_mayor_3}
\end{figure}

Figure~\ref{fig:Linea_param_cr2_mayor_3} shows that for $\beta \in (\frac{5}{2},\infty)$, the free critical point $cr_2(\beta)$ is always divergent.

\begin{figure}[H]
    \centering
\includegraphics[width=0.7\textwidth]{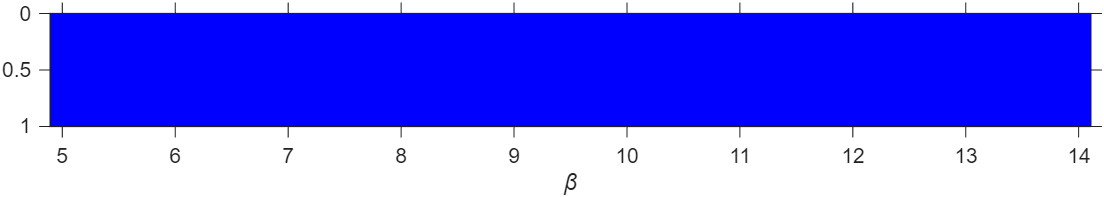}
    \caption{$cr_3(\beta)$ for $\beta \in [4.83, \infty)$}
    \label{fig:Linea_param_cr3_mayor_5}
\end{figure}
\begin{figure}[htbp]
        \centering
\includegraphics[width=0.7\textwidth]{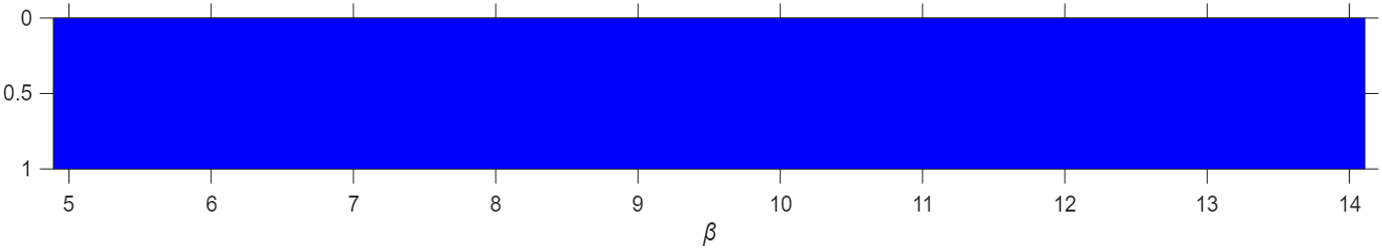}
        \caption{$cr_4(\beta)$ for $\beta \in [4.83, \infty)$}
    \label{fig:Linea_param_cr4_mayor_5}
\end{figure}

Figures~\ref{fig:Linea_param_cr3_mayor_5} and~\ref{fig:Linea_param_cr4_mayor_5} show that the two free critical points $cr_3(\beta)$ and $cr_4(\beta)$ are always divergent for all parameter values where they are defined.

Next, to obtain the global view of the parameter lines, we analyze their unified  behavior. For this purpose, unified parametric lines are generated by considering all free critical points over the intervals in which they are defined. The white regions represent the parametric values for which all free critical points converge to the root $x=1$. Consequently, these regions correspond to the stable iterative schemes within the entire family. In contrast, the black regions show the parametric values for which at least one free critical point diverges or converges to an attracting  strange fixed point or to an attracting periodic orbit.

\begin{figure}[H]
    \centering
\includegraphics[width=0.7\textwidth]{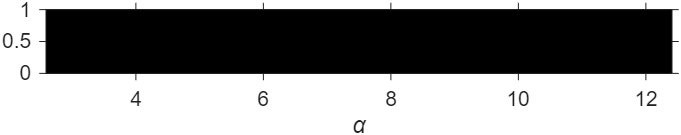}
    \caption{Unified parameter plane for $\beta \in (-\infty, 1.78]$}
    \label{fig:Linea_param_unificada_menor_1}
\end{figure}
\begin{figure}[H]
        \centering
\includegraphics[width=0.7\textwidth]{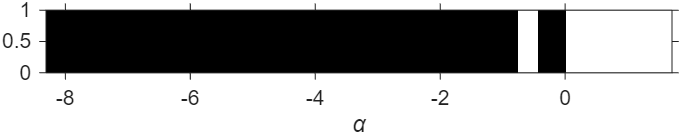}
        \caption{Unified parameter plane for $\beta \in (\frac52, \infty]$}
\label{fig:Linae_param_unificada_mayor3}
\end{figure}

In Figures~\ref{fig:Linea_param_unificada_menor_1} and~\ref{fig:Linae_param_unificada_mayor3}, we can visualize the stable parameter region of the Heun-type family of matrix iterative schemes. we can observe that all free critical points defined in the interval $\beta \in [-0.7448,-0.4376]\cup[0,1.78]$ converge only to the root $x=1$. For the remaining values of $\beta$, at least one free critical point either diverges or converges to a strange attractor. Therefore, the stable region of the parametric family of Heun-type matrix iterative schemes is
\begin{equation}\label{eq:stability region}
\beta \in [-0.7448,-0.4376]\cup[0,1.78].
\end{equation}
Our next step is to determine in detail the behavior of the black regions in Figures~\ref{fig:Linea_param_cr1_mayor_3} and~\ref{fig:Linea_param_cr2_menor_1}. These black regions can give us detailed information about the Long-term behavior of the free critical points. For this purpose, we  check whether,  for these parameter values, the free critical points converge to a strange fixed point, periodic orbits, or  strange attractors. 
\subsection{Bifurcation diagrams}

In parameter lines, the free critical points are used as initial approximations. Some free critical points were always divergent, while others converged to an unknown region and were represented in black. However, only those critical points whose corresponding parameter lines contain black (unstable) regions are considered in this analysis. The main purpose is to determine whether the free critical points converge to strange fixed points, periodic orbits, or strange attractors.

\begin{figure}[htbp]
    \centering
\includegraphics[width=0.7\textwidth]{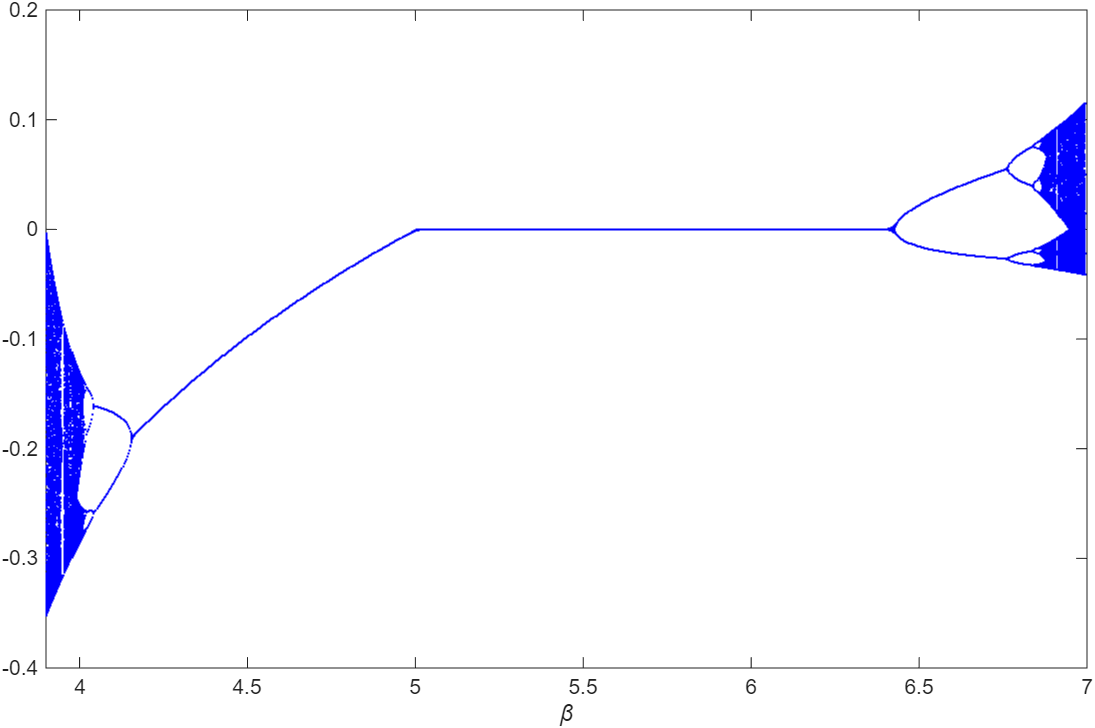}
    \caption{Bifurcation diagram associated with $cr_1(\beta)$.}
        \label{fig:db_cr1}
\end{figure}

Figure~\ref{fig:db_cr1} represents the bifurcation diagram corresponding to the black (unstable) region detected in Figure~\ref{fig:Linea_param_cr1_mayor_3} for approximately  $\beta\in[3.9,\infty[$. A qualitative change near $\beta\approx5$ can be observed in a single branch over the interval  approximately $\beta\in[4.3,6.43]$. This interval coincides with the parameter values for which the strange fixed points $x=0$ and $x=ex_1(\beta)$ are attracting. Therefore, we can conclude that the free critical point $cr_1(\beta)$ converges to one of these strange attractors throughout this region. The qualitative change at $\beta=5$ represents the change of the attracting strange fixed point; that is, initially, it converges toward the strange fixed point  $x=ex_1(\beta)$, and then toward the point  $x=0$.

Outside this interval, a sequence of period-doubling bifurcations can be observed. A period-two orbit converts into a period-four orbit, which subsequently  converts into a period-eight orbit  and higher-order periodic orbits. Finally, these cascades move toward chaotic dynamics, represented by the dense blue bands appearing near $\beta\in [3.95,4]$ and for $\beta\gtrsim6.8$.

To characterize these chaotic regions, the corresponding attractors are displayed in Figure~\ref{fig:strange_Atr_cr1_left}.

\begin{figure}[H]
    \centering
\includegraphics[width=0.7\textwidth]{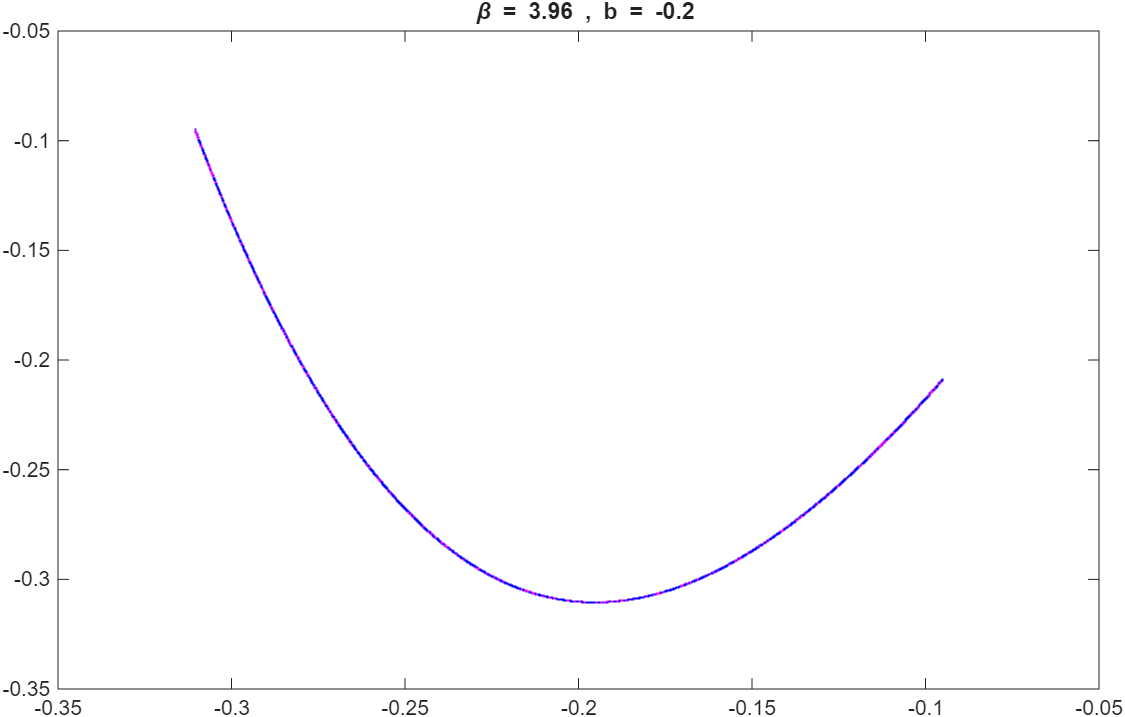}
    \caption{$\beta=3.96$}
    \label{fig:strange_Atr_cr1_left}
\end{figure}

Figure~\ref{fig:strange_Atr_cr1_left} represents the asymptotic behavior of trajectories generated from $1000$ initial conditions. To eliminate the transient effects, for each orbit, the first $500$ iterates are discarded, and the following $400$ iterates are represented in blue, while the last $100$ iterates are colored magenta.

For $\beta=3.96$, we can observe the confined orbit in a  smooth invariant curve. The trajectory is bounded and exhibits a non-periodic structure. It provides numerical evidence of the existence of strange attractors for these parameter values.

\begin{figure}[H]
    \centering
\includegraphics[width=0.7\textwidth]{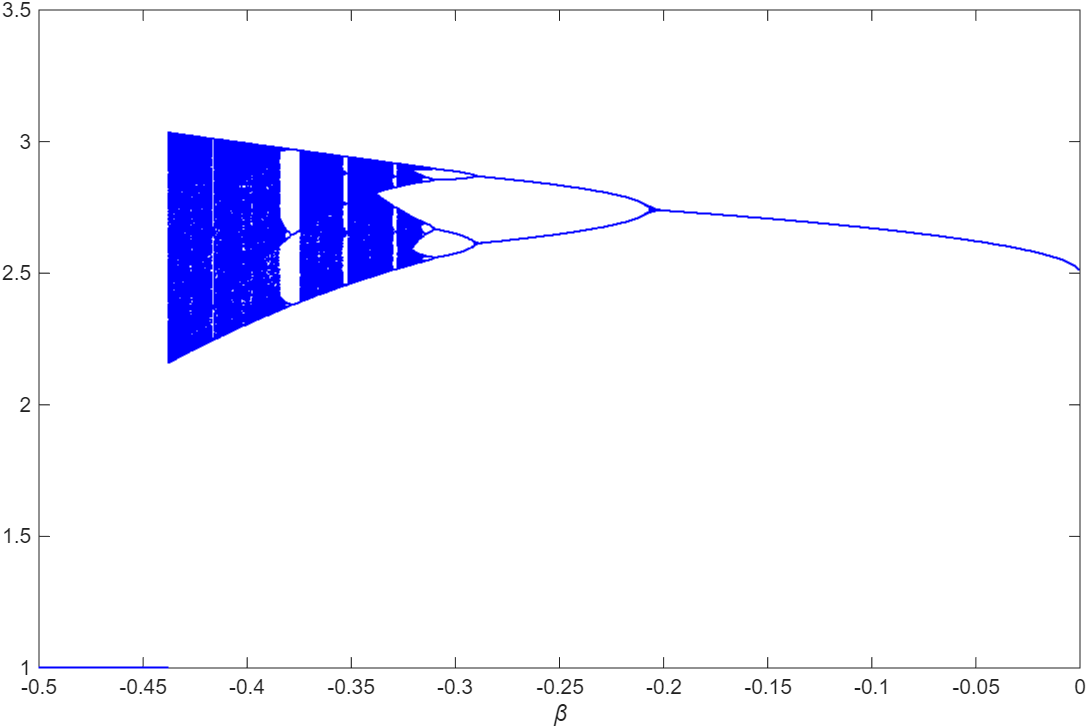}
    \caption{Bifurcation diagram associated with $cr_2(\beta)$.}
        \label{fig:db_cr2}
\end{figure}

Figure~\ref{fig:db_cr2} represents the bifurcation diagram associated with the black(unstable) region identified in Figure~\ref{fig:Linea_param_cr2_menor_1} for $\beta\in[-0.5,0[$. A single branch is observed for approximately $\beta\in]-0.21,0[$, which coincides with the interval where the strange fixed point $ex_2(\beta)$ is attracting. Therefore, the free critical point $cr_2(\beta)$ converges to this strange fixed point attractor for the corresponding parameter values.

On the left side of the single branch, a classical period-doubling cascade appears, which leads to periodic orbits of increasing order and eventually to chaotic behavior. Therefore, we  can visualize a chaotic region approximately in the interval $\beta\in[-0.43,-0.32]$. Phase space is shown in Figure~\ref{fig:strange_Atr_cr2_2} to verify the presence of chaotic attractors within this region.

\begin{figure}[H]
    \centering
\includegraphics[width=0.7\textwidth]{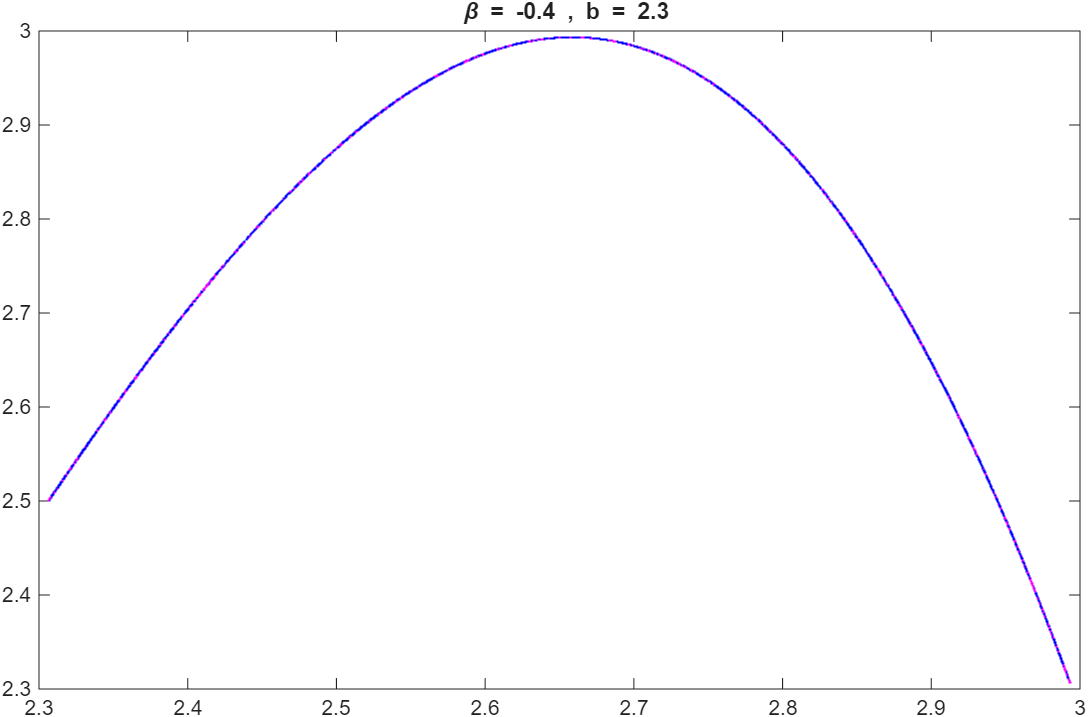}
    \caption{$\beta=-0.40$}
        \label{fig:strange_Atr_cr2_2}
\end{figure}

Figure~\ref{fig:strange_Atr_cr2_2} represents the asymptotic dynamics corresponding to $\beta=-0.40$. Again, the trajectory remains confined to smooth invariant curves and do not converge to fixed points or to finite periodic cycles. The existence of bounded non-periodic motion indicates the presence of a strange attractor. 

Next, we analyze different members of the Heun-type matrix iterative family by employing dynamical lines corresponding to the parameter values $\beta$. These dynamical lines allow us to visualize the basins of attraction associated with each iterative scheme.
\subsection{Dynamical lines}
 In dynamical lines, each point is considered an initial approximation. Moreover, we represent each point by its convergence toward different attractors using different colors. The dynamical lines are generated by using a maximum of $40$ iterations for each initial approximation and a discretization of $400$ points. In dynamical lines, the points converging to the root $x=1$ are represented in orange, while the diverging points are shown in blue. The points that converge to a strange attractor are represented in green, whereas the points that converge to an attracting periodic point are colored black. 
We divide the Heun-type matrix family into three regions. Then, we verify whether the theoretical results obtained previously are consistent with the observed dynamical behavior.\\
According to the Theorem~\ref{theo:convergence_heun4} and equation \eqref{eq:stability region}, the interval
\[
\beta \in [-0.7448,-0.4376]\cup\left[0,\frac{25}{16}\right[
,\]
corresponds to the region where the family of iterative schemes is stable, but convergence is not assured. On the other hand, for
\[
\beta \in \left[\frac{25}{16},1.78\right],
\]
the family is both stable and convergent. The remaining parametric values exhibit unstable behavior.

 \begin{figure}[htbp]
    \centering
\includegraphics[width=0.7\textwidth]{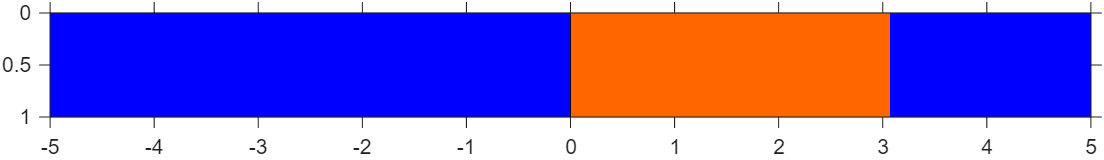}
    \caption{Dynamical line of stable and convergent parameter value $\beta=\frac{25}{16}$.}
        \label{fig:Linea din estable y convergente 2}
\end{figure}
Figure~\ref{fig:Linea din estable y convergente 2} illustrate the dynamical lines corresponding to values of $\beta$ belonging to the stable and convergent regions. It can be observed that there is only one basin of attraction associated with the root $x=1$, while the remaining points diverge.

\begin{figure}[htbp]
    \centering
\includegraphics[width=0.7\textwidth]{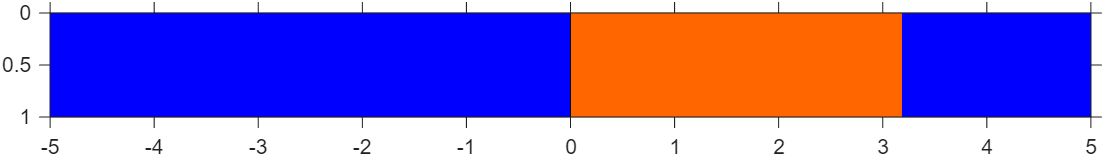}
    \caption{Dynamical lines of stable but non-assured convergent parameter values $\beta=1$.}
        \label{fig:Linea din estable y no-convergente 2}
\end{figure}
Figure~\ref{fig:Linea din estable y no-convergente 2} represents the dynamical lines corresponding to values of $\beta$ that belong to the stable but non-assured convergent region. In this case, there is only one basin of attraction associated with the root $x=1$, and the interval corresponding to this basin is larger than that in Figure~\ref{fig:Linea din estable y convergente 2}. Although these parameter values lie outside the convergence region established in Theorem \ref{theo:convergence_heun4}, the methods still exhibit satisfactory dynamical behavior.
\begin{figure}[htbp]
    \centering
\includegraphics[width=0.7\textwidth]{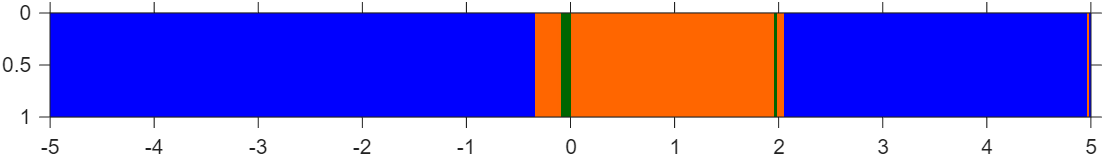}
    \caption{Dynamical lines of non-stable parameter values $\beta=5.004$.}
        \label{fig:Linea din no estable 1}
\end{figure}

Finally, Figure~\ref{fig:Linea din no estable 1} corresponds to values of $\beta$ belonging to the unstable region, where some strange fixed points become attracting. It can be observed that two types of basins of attraction appear along with the divergent blue region: one associated with the root $x=1$ and another associated with a strange attracting fixed point represented in green.

Finally, some numerical experiments are performed on different sizes of random matrices to study the numerical behavior of the proposed iterative schemes.
\section{Numerical Experiments}\label{sec:numerical}
Throughout this section, numerical experiments are presented for the computation of the standard inverse, the Moore-Penrose inverse, and the Drazin inverse by employing the proposed iterative schemes. To evaluate their  performance, the error between two successive iterates $\|X_{n+1}-X_n\|_2,$
and the residual error $\|F(X_{n+1})\|=\|I-AX_{n+1}\|_2,$ 
are computed at each iteration for each type of inverse. In the case of generalized inverses, the properties of the Moore-Penrose and Drazin inverses introduced in Section \ref{sec:preliminaries} are verified numerically. Since all matrix norms  give the same qualitative information, we use only the Euclidean norm in the  numerical tests.

The stopping criterion used in the experiments  for the standard inverse and the Moore-Penrose inverse is 
\[
\|X_{n+1}-X_n\|_2 < 10^{-9}
\quad \text{and} \quad
\|I-AX_{n+1}\|_2 < 10^{-9},
\]
using double-precision arithmetic.  For the Drazin inverse, the stopping criterion is 
\[
\|X_{n+1}-X_n\|_2 < 10^{-9}.
\]
To determine the convergence order of iterative schemes, the ACOC is computed  by
\[
\mathrm{ACOC}
=
\frac{
\ln \left(
\dfrac{\|X_{n+1}-X_n\|}
{\|X_n-X_{n-1}\|}
\right)
}{
\ln \left(
\dfrac{\|X_n-X_{n-1}\|}
{\|X_{n-1}-X_{n-2}\|}
\right)
}, \qquad n \geq 2.
\]
This technique provides a practical numerical estimation of the convergence order in matrix iterative schemes (see \cite{Cordero_2007}). The proposed Heun-type iterative schemes ($BR(\beta)$) are compared with the classical Newton-Schulz method and the Homeier scheme. The theoretical convergence order of the Heun-type matrix iterative methods is established in Theorem~\ref{theo:convergence_heun4},  while the quadratic convergence order of the Newton-Schulz method and the cubic convergence order of the Homeier methods are given in \cite{Schulz_1933, Kansal_2023}, respectively. For CPU-time, the results show an average of $30$ execution for all iterative schemes. All the results were carried out in MATLAB R2025a on an  Intel Core i3-N305 processor running at 1.80 GHz with 8 GB RAM.

For approximating the standard inverse, we considered the convergence region and stability region  obtained in Theorem \ref{theo:convergence_heun4} and in equation \eqref{eq:stability region}, respectively.
\begin{example}\label{EX:Sq.Mat_heun4_50}
Let $A\in\mathbb{C}^{N\times N}$ be a randomly generated non-singular square matrix with $N\in \{50, 100, 500\}$. The initial approximation is chosen as
\[
X_0=\frac{A^*}{\|A\|_2^2}.
\]
The objective is to approximate the inverse of $A$.
\end{example}

\begin{table}[htbp]
\centering
\caption{Results for approximating the inverse of the random matrix $A\in\mathbb{C}^{50\times 50}$ in Example~\ref{EX:Sq.Mat_heun4_50}.}
\label{tab:Sq.Mat_heun4_50}
\begin{tabular*}{\textwidth}{@{\extracolsep\fill}lllllll@{}}\toprule 
Method & $\beta$ & Iter. & $\|I-AX_{n+1}\|_2$ & $\|X_{n+1}-X_n\|_2$ & ACOC&CPU-time(s) \\
\midrule
Newton--Schulz&$--$  & $25$ & $2.36\times10^{-14}$ & $7.49\times10^{-14}$ & $2.0014$& $0.0208$\\
Homeier  &$--$    & $14$ & $2.52\times10^{-14}$ & $3.09\times10^{-10}$ & $3.5620$ &$0.0167$\\
\midrule
\multicolumn{7}{c}{Values of $\beta$ for which both convergence and stability are guaranteed} \\
\midrule
$BR(\beta)$ & $1.75$ & $11$ & $9.18\times10^{-15}$ & $8.19\times10^{-14}$ & $5.0827$&$0.0155$ \\
$BR(\beta)$ & $1.68$ & $11$ & $1.01\times10^{-15}$ & $8.76\times10^{-14}$ & $4.9774$&$0.0159$ \\
\midrule
\multicolumn{7}{c}{Values of $\beta$ for which stability is guaranteed but not convergence} \\
\midrule
$BR(\beta)$ & $0.5$ & $10$ & $9.22\times10^{-15}$ & $7.27\times10^{-14}$ & $3.5117$&$0.0146$ \\
$BR(\beta)$ & $-0.5$ & $10$ & $9.53\times10^{-15}$ & $7.88\times10^{-14}$ & $3.9395$&$0.0144$ \\
\midrule
\multicolumn{7}{c}{Values of $\beta$ for which neither convergence nor stability is guaranteed} \\
\midrule
$BR(\beta)$ & $-30$ & $50$ & $7.36\times10^{95}$ & $5.26\times10^{95}$ & $--$&$0.2459$ \\
$BR(\beta)$ & $50$ & $50$ & $7.98\times10^{240}$ & $1.45\times10^{240}$ & $--$&$0.2482$ \\
\midrule
\multicolumn{7}{c}{Values of $\beta$ for which the Heun-type family attains fifth-order convergence} \\
\midrule
$BR(\beta)$ & $\dfrac{25}{16}$ & $11$ & $1.07\times10^{-14}$ & $7.63\times10^{-14}$ & $5.3062$&$0.0148$ \\
\bottomrule
\end{tabular*}%

\end{table}


\begin{table}[htbp]
\centering
\caption{Results for approximating the inverse of the random matrix $A\in\mathbb{C}^{100\times 100}$ in Example~\ref{EX:Sq.Mat_heun4_50}.}
\label{tab:Sq.Mat_heun4_100}
\begin{tabular*}{\textwidth}{@{\extracolsep\fill}lllllll@{}}\toprule 
Method & $\beta$ & Iter. & $\|I-AX_{n+1}\|_2$ & $\|X_{n+1}-X_n\|_2$ & ACOC &CPU-time(s)\\
\midrule
Newton--Schulz & $--$ & $32$ & $2.69\times10^{-13}$ & $5.05\times10^{-12}$ & $2.0107$& $0.1775$\\
Homeier        & $--$ & $19$ & $2.82\times10^{-13}$ & $2.80\times10^{-12}$ & $3.0331$& $0.1063$ \\
\midrule
\multicolumn{7}{c}{Values of $\beta$ for which both convergence and stability are guaranteed} \\
\midrule
$BR(\beta)$ & $1.75$ & $14$ & $6.59\times10^{-14}$ & $2.95\times10^{-12}$ & $5.2136$&$0.1097$ \\
$BR(\beta)$ & $1.68$ & $14$ & $6.57\times10^{-14}$ & $2.69\times10^{-12}$ & $4.9925$&$0.1090$ \\
\midrule
\multicolumn{7}{c}{Values of $\beta$ for which stability is guaranteed but convergence is not assured} \\
\midrule
$BR(\beta)$ & $0.5$ & $13$ & $6.77\times10^{-14}$ & $2.95\times10^{-12}$ & $3.8020$ &$0.1036$\\
$BR(\beta)$ & $-0.5$ & $12$& $6.75\times10^{-14}$ & $3.10\times10^{-12}$ & $3.4237$&$0.0957$ \\
\midrule
\multicolumn{7}{c}{Values of $\beta$ for which neither convergence nor stability is guaranteed} \\
\midrule
$BR(\beta)$ & $-30$ & $50$ & $3.64\times10^{96}$ & $1.01\times10^{96}$ & $--$ &$0.9957$\\
$BR(\beta)$ & $50$ & $50$ & $3.78\times10^{180}$ & $4.84\times10^{179}$ & $--$&$0.9479$\\
\midrule
\multicolumn{7}{c}{Values of $\beta$ for which the Heun-type family attains fifth-order convergence} \\
\midrule
$BR(\beta)$ & $\dfrac{25}{16}$ & $14$ & $6.33\times10^{-14}$ & $3.22\times10^{-12}$ & $5.2576$&$0.1105$ \\
\bottomrule
\end{tabular*}%
\end{table}


\begin{table}[htbp]
\centering
\caption{Results for approximating the inverse of the random matrix $A\in\mathbb{C}^{500\times 500}$ in Example~\ref{EX:Sq.Mat_heun4_50}.}
\label{tab:Sq.Mat_heun4_500}
\begin{tabular*}{\textwidth}{@{\extracolsep\fill}lllllll@{}}\toprule 
Method & $\beta$ & Iter. & $\|I-AX_{n+1}\|_2$ & $\|X_{n+1}-X_n\|_2$ & ACOC & CPU-time(s)\\
\midrule
Newton--Schulz & $--$ & $34$ & $4.21\times10^{-13}$ & $7.72\times10^{-11}$ & $2.0192$&$5.2936$ \\
Homeier        & $--$ & $20$ & $4.31\times10^{-13}$ & $1.41\times10^{-12}$ & $3.0728$&$4.5263 $ \\
\midrule
\multicolumn{7}{c}{Values of $\beta$ for which both convergence and stability are guaranteed} \\
\midrule
$BR(\beta)$ & $1.75$ & $15$ & $2.89\times10^{-15}$ & $1.025\times10^{-4}$ & $4.8422$ &$4.8015$\\
$BR(\beta)$ & $1.68$ & $15$ & $7.08\times10^{-14}$ & $1.38\times10^{-12}$ & $4.7051$&$4.5791$\\
\midrule
\multicolumn{7}{c}{Values of $\beta$ for which stability is guaranteed but convergence is not} \\
\midrule
$BR(\beta)$ & $0.5$ & $13$ & $7.00\times10^{-14}$ & $5.8469\times10^{-11}$ & $5.2057$&$4.2979$ \\
$BR(\beta)$ & $-0.5$ & $13$ & $6.99\times10^{-14}$ & $1.3887\times10^{-12}$ & $3.8568$&$4.1226$ \\
\midrule
\multicolumn{7}{c}{Values of $\beta$ for which neither convergence nor stability is guaranteed} \\
\midrule
$BR(\beta)$ & $-30$ & $50$ & $3.63\times10^{96}$ & $2.5327\times10^{95}$ & $--$&$36.7232$ \\
$BR(\beta)$ & $50$ & $50$ & $2.34\times10^{82}$ & $1.2868\times10^{81}$ & $--$&$32.1419$ \\
\midrule
\multicolumn{7}{c}{Values of $\beta$ for which the Heun-type family attains fifth-order convergence} \\
\midrule
$BR(\beta)$ & $\dfrac{25}{16}$ & $15$ & $6.99\times10^{-14}$ & $1.4006\times10^{-12}$ & $5.1475$&$4.6069$ \\
\bottomrule
\end{tabular*}%
\end{table}

Tables~\ref{tab:Sq.Mat_heun4_50}, \ref{tab:Sq.Mat_heun4_100}, and \ref{tab:Sq.Mat_heun4_500} present the numerical results obtained using the Newton-Schulz, Homeier, and Heun-type iterative schemes for approximating the inverse of randomly generated non-singular matrices.

The  numerical results confirm the theoretical convergence orders: quadratic convergence for the Newton--Schulz method and cubic convergence for the Homeier method. Some interesting results from the fourth-order Heun-type family of iterative schemes can be analyzed.

The values of $\beta$ for which both convergence and stability are guaranteed show favorable behavior of the corresponding iterative schemes for all sizes of matrices. The residual error and the error between successive iterates are significantly low. Moreover, the order of convergence is also increased. The number of iterations increases with the matrix size but remains lower than that of the schemes with a lower convergence order. The CPU-time for these members of the Heun-type family is lower than that of the Newton-Schulz scheme. 

Similar results are obtained for the case where stability is guaranteed, but convergence is not assured. For different members of the Heun-type matrix family, we obtain fourth-order convergence even though theoretical convergence is not guaranteed.

For the parameter values where neither stability nor convergence is guaranteed, we observe that the members of the family do not converge to the solution within the prescribed iteration limit. Also, they exhibit divergent and unstable behavior, as indicated by the increasing errors. 

For $\beta=\frac{25}{16}$, we obtained theoretically a fifth-order Heun-type matrix iterative scheme. Hence, the numerical results confirm that the fifth-order convergence is maintained. Moreover, the errors between the iterations and the residual error remain sufficiently small.

CPU time for the iterative schemes where either convergence or stability is maintained is lower than the Newton-Schulz for almost all iterative schemes.

\begin{example}\label{EX:Rec.Mat_Heun4_Moorep}
Let $A\in\mathbb{C}^{M\times N}$ be a randomly generated rectangular matrix for different values of $M$ and $N$. The initial approximation for all methods is chosen as
\[
X_0=0.9\frac{A^*}{\|A\|_2^2}
.
\]
\end{example}

Similar to the previous section, we will denote the residual error, the error between the iterate and the conditions of the Moore-Penrose inverse, in  the following way:
\[
F(X_{n+1})=\|I-AX_{n+1}\|_2,
\qquad
Error=\|X_{n+1}-X_n\|_2,
\] with
\begin{align*}
PM1 &= \|AX_{n+1} A-A\|_2,\\
PM2 &= \|X_{n+1} AX_{n+1}-X_{n+1}\|_2,\\
PM3 &= \|X_{n+1} A-(X_{n+1} A)^*\|_2,\\
PM4 &= \|AX_{n+1}-(AX_{n+1})^*\|_2.
\end{align*}
\begin{table}[H]
\centering
\caption{Results for approximating the Moore--Penrose inverse of a random matrix $A \in \mathbb{C}^{10 \times 20}$ with $\beta\in \{2, 2.5, \frac{25}{16}\}.$}
\label{tab:Moore_inverse_10n20_heun4}
\resizebox{\textwidth}{!}{%
\begin{tabular}{ccccccccccccc}
\toprule 
Method &$\beta$&M&N& Iter. & $F(X_{n+1})$ & $Error$ & $PM1$ & $PM2$ & $PM3$ & $PM4$ & ACOC&CPU-time (s) \\
\midrule
NS &$--$&$10$&$20$& $14$ & $5.5\times 10^{-16}$ & $5.1 \times 10^{-16}$ &$2.6\times 10^{-14}$& $3.8\times 10^{-14}$&$2.1\times 10^{-15}$&$8.7\times 10^{-16}$&$2.0024$&$0.0067$ \\
Homeier&$--$&$10$&$20$& $8$  & $7.7 \times 10^{-16}$ & $1.6 \times 10^{-11}$ &$8.9\times 10^{-12}$&$1.6\times 10^{-11}$&$2.3\times 10^{-15}$&$8.1\times 10^{-16}$& $3.5380$ &$0.0073$\\
$BR(\beta)$&$2$ &$10$&$20$& $6$ & $2.0\times10^{-14}$ & $1.4\times10^{-14}$ &
$7.2\times10^{-14}$ & $1.1\times10^{-14}$ &
$6.9\times10^{-14}$ & $1.6\times10^{-14}$ & $4.3467$&$0.0034$ \\
\midrule
NS & $--$&$100$&$200$&$18$ & $4.4\times 10^{-15}$ & $6,2 \times 10^{-16}$ &$1.0\times 10^{-13}$& $5.0\times 10^{-16}$&$2.4\times 10^{-14}$&$5.5\times 10^{-15}$&$2.0033$&$0.2218$ \\
Homeier&$--$&$100$&$200$& $11$ & $4.2 \times 10^{-15}$ & $6.4 \times 10^{-16}$ &$1.3\times 10^{-13}$&$5.2\times 10^{-16}$&$2.2\times 10^{-14}$&$6.9\times 10^{-15}$& $3.0443$&$0.0962$\\
$BR(\beta)$ &$2.5$&$100$&$200$ & $9$ & $4.5\times10^{-14}$ & $2.1\times10^{-14}$ &
$1.9\times10^{-12}$ & $1.4\times10^{-14}$ &
$4.3\times10^{-13}$ & $5.8\times10^{-14}$ & $4.5529$&$0.0936$ \\
\midrule
NS &$--$&$500$&$600$& $23$ & $3.7\times 10^{-14}$ & $3.8 \times 10^{-10}$ &$3.3\times 10^{-10}$& $3.9\times 10^{-10}$&$1.7\times 10^{-13}$&$3.9\times 10^{-14}$&$2.0465$&$3.4104$ \\
Homeier&$--$&$500$&$600$& $14$ & $3.0 \times 10^{-15}$ & $3.0 \times 10^{-14}$ &$1.3\times 10^{-12}$&$2.3\times 10^{-15}$&$1.8\times 10^{-13}$&$3.7\times 10^{-14}$& $3.1008$&$2.4954$\\
$BR(\beta)$ &$\frac{25}{16}$&$500$&$600$& $10$ & $8.8\times10^{-14}$ & $1.2\times10^{-13}$ &
$2.2\times10^{-11}$ & $6.3\times10^{-14}$ &
$5.6\times10^{-12}$ & $1.6\times10^{-13}$ & $10.5721$&$3.0984$ \\
\bottomrule
\end{tabular}%
}
\end{table}
Table~\ref{tab:Moore_inverse_10n20_heun4} shows the numerical results obtained using Newton-Schulz, Homeier, and the members $BR(2)$, $BR(2.5)$, and $BR\Big(\frac{25}{16}\Big)$ of the Heun-type matrix iterative family. All these iterative schemes satisfy the convergence order established in Theorem~\ref{theo:Moore_pen_Heun4} and in \cite{Schulz_1933, Kansal_2023}. However, for $BR\Big(\frac{25}{16}\Big)$, the convergence order is not maintained and increases significantly. Moreover, the numerical results indicate that, apart from obtaining small residual and iteration errors, the computed approximations satisfy the Moore-Penrose conditions with high accuracy. Additionally, since the Heun-type matrix iterative schemes have a higher convergence order than the Newton-Schulz and Homeier methods, they require fewer iterations to converge to the solution. No considerable difference can be observed for different iterative schemes depending on the size of the matrix. 

\begin{example}\label{Ex:Drazin_particular_Heun4}
Let $A\in\mathbb{R}^{6\times6}$ be the matrix
\[
A=
\begin{pmatrix}
1 & -1 & 0 & 0 & 0 & 0\\
-1 & 1 & 0 & 0 & 0 & 0\\
-1 & -1 & 1 & -1 & 0 & 0\\
-1&-1&-1&1&0&0\\
-1 & -1 & -1 & 0 & 2 & -1\\
-1 & -1 & 0 & -1 & -1 & 2
\end{pmatrix}.
\]
The initial approximation is chosen as
\[
X_0=\frac{2A^k}{\|A\|^{k+1}},
\]
with index $k=\operatorname{ind}(A)=2$.

The exact Drazin inverse of $A$,  defined in \cite{Artidiello_2020}, is as follows
\[
A^D=
\begin{pmatrix}
\frac14 & -\frac14 & 0 & 0 & 0 & 0\\

-\frac14 & \frac14 & 0 & 0 & 0 & 0\\

0 & 0 & \frac14 & -\frac14 & 0 & 0\\

0 & 0 & -\frac14 & \frac14 & 0 & 0\\

0 & 0 & -\frac{5}{12} & -\frac{7}{12} & \frac23 & \frac13\\

0 & 0 & -\frac{7}{12} & -\frac{5}{12} & \frac13 & \frac23
\end{pmatrix}.
\]
\end{example}
Similar to the previous section, to verify the Drazin inverse conditions, the following quantities are computed:
\[
Error=\|X_{n+1}-X_n\|_2,
\]
 with
\begin{align*}
D1 &= \|A^kX_{n+1}A-A^k\|,\\
D2 &= \|X_{n+1}AX_{n+1}-X_{n+1}\|,\\
D3 &= \|AX_{n+1}-X_{n+1}A\|.
\end{align*}
where $k$ is the index of matrix A.
\begin{table}[H]
\centering
\caption{Results for approximating the Drazin inverse of the matrix in Example~\ref{Ex:Drazin_particular_Heun4}.}
\label{tab:Drazin_Heun4_matrix_given}
\resizebox{\textwidth}{!}{%
\begin{tabular}{cccccccc}\toprule 
Method & Iter. & $Error$ & $D1$ & $D2$ & $D3$ & ACOC& CPU-time (s) \\
\midrule
Newton--Schulz & $10$ & $7.7\times10^{-14}$ &
$7.6\times10^{-14}$ &
$7.7\times10^{-14}$ &
$8.0\times10^{-16}$ & $2.0113$&$0.00055$ \\
Homeier & $6$ & $2.5\times10^{-11}$ &
$2.5\times10^{-11}$ &
$2.5\times10^{-11}$ &
$1.2\times10^{-15}$ & $3.5613$&$0.00036$ \\
$BR(\beta)$ & 5 & $6.6\times10^{-15}$ &
$6.6\times10^{-14}$ &
$4.4\times10^{-15}$ &
$5.4\times10^{-15}$ & $6.15$&$0.00043$ \\
\bottomrule
\end{tabular}%
}
\end{table}

Table~\ref{tab:Drazin_Heun4_matrix_given} illustrates the numerical results obtained using the member $BR\Big(\frac{25}{16}\Big)$ of the  Heun-type matrix iterative family.
the Heun-type matrix iterative schemes converge to the solution in fewer iterations due to their higher convergence order. Moreover, the numerical results indicate that the proposed Heun-type iterative scheme attains a computational convergence order close to six..
It can also be observed that the results obtained by ACOC are consistent with the theoretical results for the Newton-Schulz and Homeier methods.  The overall numerical results, such as the error between the iterates and the error corresponding to the properties of the Drazin inverse, are favorable for the proposed iterative schemes. No considerable difference in CPU-time can be observed.

\section{Conclusions}\label{sec5}
In this manuscript, a family of fourth-order root-finding schemes is developed by incorporating a suitable weight function into the third-order Heun method. Afterward, its matrix framework is established for approximating inverse and generalized inverse matrices. Subsequently, the convergence and convergence order are proved for standard inverse and generalized inverses, such as the Moore-Penrose inverse and the Drazin inverse. Consequently, some members of the Heun-type family converge with fourth-order convergence. Meanwhile, for a particular member, we obtain fifth-order convergence.

Additionally, real dynamics is employed to analyze the dynamical behavior of the Heun-type family. As a result, stable iterative schemes are achieved from the entire family. The unstable region is studied employing bifurcation diagrams. Finally, some numerical experiments are performed on different sizes of random matrices for approximating the standard and generalized inverses. These experiments show the favorable behavior of the iterative schemes from the proposed family, where both convergence and stability are guaranteed, as well as when either convergence or stability is guaranteed.

\subsection*{Author Contributions}

All the authors contributed equally to this work.

\subsection*{Acknowledgments}

\subsection*{Financial Disclosure}

\subsection*{Conflicts of Interest}

The authors declare no conflicts of interest.

\bibliographystyle{ieeetr}
\bibliography{References}

\end{document}